\crefname{thm}{Theorem}{Theorems}
\crefname{pro}{Proposition}{Propositions}
\crefname{lem}{Lemma}{Lemmas}
\crefname{rmk}{Remark}{Remarks}
\crefname{cor}{Corollary}{Corollaries}
\crefname{dfn}{Definition}{Definitions}
\crefname{ex}{Example}{Examples}
\crefname{section}{Section}{Sections}
\crefname{subsection}{Subsection}{Subsections}
\newcommand{\eps}{\varepsilon}
\newcommand{\To}{\rightarrow}
\newcommand{\as}{{\rm d}\mathbb{P}\times{\rm d} t-a.e.}
\newcommand{\ps}{\mathbb{P}-a.s.}
\newcommand{\essinf}{\mathop{\operatorname{ess\,inf}}}
\newcommand{\F}{\mathcal{F}}
\newcommand{\E}{\mathbb{E}}
\newcommand{\R}{{\mathbb R}}
\newcommand{\p}{{\mathbb P}}
\newcommand{\Q}{{\mathbb Q}}
\newcommand{\Sbb}{\mathbb{S}}
\newcommand{\Sbf}{\mathbf{S}}
\newcommand{\scal}{\mathcal{S}}
\newcommand{\lcal}{\mathcal{L}}
\newcommand{\mcal}{\mathcal{M}}
\newcommand{\hcal}{\mathcal{H}}
\newcommand{\T}{[0,T]}
\newcommand{\RE}{\forall}
\newcommand {\Dis}{\displaystyle}
\newtheorem{thm}{Theorem}[section]
\newtheorem{lem}[thm]{Lemma}
\newtheorem{pro}[thm]{Proposition}
\newtheorem{rmk}[thm]{Remark}
\newtheorem{cor}[thm]{Corollary}
\newtheorem{ex}[thm]{Example}
\journal{arXiv}
\begin{document}
\begin{frontmatter}

\title{{Uniqueness of adapted solutions to scalar BSDEs\\ with  Peano-type generators}\tnoteref{found}}
\tnotetext[found]{This work is supported by National Natural Science Foundation of China (Nos. 12171471, 12031009 and 11631004), by Key Laboratory of Mathematics for Nonlinear Sciences (Fudan University), Ministry of Education, Handan Road 220, Shanghai 200433, China; by Lebesgue Center of Mathematics ``Investissements d'avenir" program-ANR-11-LABX-0020-01, by CAESARS-ANR-15-CE05-0024 and by MFG-ANR-16-CE40-0015-01.
\vspace{0.2cm}}

\author[Fan]{Shengjun Fan} \ead{shengjunfan@cumt.edu.cn}
\author[Hu]{Ying Hu} \ead{ying.hu@univ-rennes1.fr}
\author[Tang]{Shanjian Tang} \ead{sjtang@fudan.edu.cn} \vspace{-0.5cm}

\affiliation[Fan]{organization={School of Mathematics, China University of Mining and Technology},
            city={Xuzhou 221116},
            country={China}}

\affiliation[Hu]{organization={Univ. Rennes, CNRS, IRMAR-UMR6625},
            city={F-35000, Rennes},
            country={France}}

\affiliation[Tang]{organization={Department of Finance and Control Sciences, School of Mathematical Sciences, Fudan University},
            city={Shanghai 200433},
            country={China}}

\begin{abstract}
A Backward Stochastic Differential Equation (BSDE) with a Peano-type generator,  is known to have infinitely many solutions when the terminal value is vanishing, and is shown to have possibly multiple solutions even when the  terminal value is not vanishing  but nonnegative. In this paper, we study the uniqueness of adapted solutions  of such a BSDE when the terminal value is almost surely positive. Two methods are developed. The first one is to connect the BSDE to an optimal  stochastic control problem: under suitable integrability of the terminal values, with a verification argument, we prove that the first component of the adapted solution pair is the value process for the  optimal  stochastic  control problem. The second one appeals to a change of variables, and is more inclined to analysis: by a change of variables, the original BSDE is reduced to a convex quadratic BSDE, and then using the $\theta$-difference method, we give a sharp result in some special case,  which includes  the BSDE governing the well-known Kreps-Porteus utility.\vspace{0.2cm}
\end{abstract}

\begin{keyword}
Backward stochastic differential equation \sep Existence and uniqueness \sep Peano-type\sep \\
\hspace*{1.8cm} $\theta$-difference method \sep Kreps-Porteus utility.\vspace{0.2cm}

\MSC[2010] 60H10
\end{keyword}

\end{frontmatter}
\vspace{-0.4cm}

\section{Introduction}
\label{sec:1-Introduction}
\setcounter{equation}{0}

Define $\R:=(-\infty,+\infty)$, $\R_+:=[0,+\infty)$ and $\R_{++}:=(0,+\infty)$. Given a real $T>0$ and a positive integer $d$. Let $(\Omega, \F, \mathbb{P})$ be a complete probability space with augmented filtration $(\F_t)_{t\in\T}$
generated by a $d$-dimensional standard Brownian motion $(B_t)_{t\in\T}$, and $\F=\F_T$. The equalities and inequalities between random elements are always understood to hold $\ps$. We are concerned with the following scalar backward stochastic differential equation (BSDE for short):
\begin{equation}\label{eq:1.1}
  Y_t=\xi+\int_t^T g(s,Y_s,Z_s){\rm d}s-\int_t^T Z_s {\rm d}B_s, \ \ t\in\T,
\end{equation}
where $\xi$ is an $\F_T$-measurable real-valued random variable, called the terminal value, and for each $(y,z)\in \R\times\R^{1\times d}$, the stochastic process
$g(\cdot, \cdot, y, z):\Omega\times\T\to \R $
is $(\F_t)$-progressively measurable (the function $g$ is called the generator). An adapted solution of BSDE \eqref{eq:1.1} is an pair of $(\F_t)$-progressively measurable processes $(Y_t,Z_t)_{t\in\T}$ taking values in $\R\times\R^{1\times d}$ such that $\ps$, $t\mapsto Y_t$ is continuous, $t\mapsto |g(t,Y_t,Z_t)|+|Z_t|^2$ is integrable, and \eqref{eq:1.1} is fulfilled. \citet{PardouxPeng1990SCL} established the first existence and uniqueness result on the square-integrable adapted solution of a nonlinear BSDE under the Lipschitz condition on the generator $g$ and the square-integrable condition on the terminal value $\xi$. Since then, BSDEs have received a lot of attentions and gradually become a powerful tool in various fields such as mathematical finance, nonlinear expectation, PDE, stochastic control and game, and so on. The reader is referred to \cite{ElKarouiPengQuenez1997MF,Peng1997BSDEbook,
Pardoux1999Nonlinear,Kobylanski2000AP,ChenEpstein2002Econometrica,
CoquetHuMeminPeng2002PTRF,Peng2004LectureNotes,
HuImkeller2005AAP,Jia2008PHDThesis,Jiang2008AAP,Xing2017FS,FanHu2021SPA,FanHuTang2023arXiv} for details among others.

The existence of a square-integrable adapted solution for BSDE \eqref{eq:1.1} was given in \citet{LepeltierSanMartin1997SPL} when the terminal value $\xi$ is square-integrable, and the generator $g$ is only continuous and has a linear growth in both unknown variables $(y,z)$. See also \citet{FanJiang2012JAMC} for the case of the $L^p\ (p>1)$ solution. By virtue of the localization method of \citet{BriandHu2006PTRF} along with the a priori estimate technique and the comparison theorem for adapted solutions of BSDEs, existence were given for more general situations that the generator $g$ may have a one-sided linear/superlinear growth in the state variable $y$ and a super-linear/sub-quadratic/quadratic growth in the state variable $z$. Relevant results are available in for example \cite{Kobylanski2000AP,BriandHu2006PTRF,BarrieuElKaroui2013AoP,
Fan2016SPA,Fan2016SPL,FanHu2021SPA,FanHuTang2023SPA,FanHuTang2023arXiv}. It is well known that under the above-mentioned conditions, BSDE \eqref{eq:1.1} can have multiple adapted solutions when the terminal value is zero. For example, for each constant $c\in\T$, the pair of processes
$$
(Y_t,Z_t):=\frac{1}{4}\left([(c-t)^+]^2,0\right), \ \ t\in\T
$$
is an adapted solution of the following BSDE:
\begin{equation}\label{eq:1.2}
Y_t=\int_t^T \sqrt{|Y_s|}~{\rm d}s-\int_t^T Z_s {\rm d}B_s,\ \ t\in\T.
\end{equation}
This is because  the following Ordinary Differential Equation (ODE)
$$\left\{
\begin{array}{l}
y'(t)=-\sqrt{y(t)},\ \ t\in\T;\\
y(T)=0
\end{array}
\right.
$$
has infinitely many solutions: $y_t=[(c-t)^+]^2/4$ on the time interval $\T$ for constants $c\in \T$.
To the best of our knowledge, Peano first explored this kind of phenomenon of multiple solutions of ODEs (see for example \citet{Herrmann2001CRD}, \citet{Crippa2011JDE} and \citet{Delarue2014DCDS}). We call the generator like $\sqrt{y~}$ being of Peano-type.

To obtain the uniqueness of an adapted solution of BSDE \eqref{eq:1.1}, some further conditions have to be imposed on the generator $g$. The uniform Lipschitz condition used in \citet{PardouxPeng1990SCL} is just the classical one. Weaker conditions than the uniform Lipschitz condition were subsequently
 explored and verified during the last three decades. Among these conditions we would like to especially mention Mao's non-Lipschitz condition, the monotonicity condition and the one-sided Osgood condition with respect to the state variable $y$, and the uniform continuity condition and the convexity/concavity condition with respect to the state variable $z$. The reader is referred to \cite{Mao1995SPA,Pardoux1999Nonlinear,Kobylanski2000AP, BriandDelyonHu2003SPA,BriandHu2008PTRF,Jia2008CRA,Jia2010SPA,
FanJiangTian2011SPA,DelbaenHuRichou2011AIHPPS,BarrieuElKaroui2013AoP,
BahlaliEssakyHassani2015SIAM,
Fan2016SPA,BahlaliEddahbiOuknine2017AoP,FanHuTang2020CRM,FanHu2021SPA,
FanHuTang2023SPA,FanHuTang2023arXiv} for more details. We mention that in order to verify the validity of these conditions, some novel ideas and tools were also constantly presented and developed, such as Gronwall's and Bihari's inequalities, Girsanov's transform, the $\theta$-difference method and the test function technique.

The  Peano-type generator like $\sqrt{y~}$ goes beyond all the above-mentioned works. The previous BSDE \eqref{eq:1.2} admits multiple adapted solutions. Moreover, even for a  nonzero but nonnegative terminal value,  a BSDE with a Peano-type generator might still admit multiple adapted solutions. For example, for a stopping time $\tau\in (0,T)$, let $A\in\F_\tau$ be an event such that $\p(A)\in (0,1)$, and $\lambda\in [0,1]$ be a constant. Then $\eta_\lambda:=\lambda\tau+(1-\lambda)T$ is an $\F_\tau$-measurable random variable. Denote by  $(Y^\lambda_t,Z^\lambda_t)_{t\in [0,\tau]}$  the adapted square-integrable maximal solution of the following BSDE
$$
Y_t^\lambda=\left(1+\frac{T-\tau}{2}\right)^2{\bf 1}_{A}+\left[\frac{(\eta_\lambda-\tau)^+}{2}\right]^2{\bf 1}_{A^c}+\int_t^\tau \sqrt{Y^\lambda_s}~{\rm d}s-\int_t^\tau Z^\lambda_s {\rm d}B_s,\ \ t\in [0,\tau].\vspace{0.1cm}
$$
Then, by setting for each $t\in\T$,
$$
\bar Y_t^\lambda:=\left\{\left(1+\frac{T-t}{2}\right)^2{\bf 1}_{A}+\left[\frac{(\eta_\lambda-t)^+}{2}\right]^2{\bf 1}_{A^c}\right\}{\bf 1}_{t>\tau}+Y^\lambda_t {\bf 1}_{t\leq \tau}\ \ \ {\rm and}\ \ \ \bar Z_t^\lambda:=Z^\lambda_t{\bf 1}_{t\leq \tau},
$$
we can directly verify that $(\bar Y_t^\lambda,\bar Z_t^\lambda)_{t\in\T}$ is a square-integrable adapted solution of
the following BSDE
$$
Y_t={\bf 1}_{A}+\int_t^T \sqrt{Y_s}~{\rm d}s-\int_t^T Z_s {\rm d}B_s,\ \ t\in\T.
$$
Note the constant $\lambda$ can be arbitrarily chosen in $[0,1]$. The last BSDE admits infinitely many square-integrable adapted solutions. Now, let us take a look at the ODE with a Peano-type generator when the terminal value is almost surely positive. By virtue of the variable separation method, it is straightforward to verify  that for any given $c>0$, the following ODE
$$\left\{
\begin{array}{l}
y'(t)=-\sqrt{y(t)},\ \ t\in\T;\\
y(T)=c
\end{array}
\right.
$$
does have a unique solution $y_t=(\sqrt{c}+T/2-t/2)^2$ on the time interval $\T$. Then, a question is naturally asked: for any given square-integrable terminal value $\xi\in\R_{++}$, is the square-integrable solution $(Y_t,Z_t)_{t\in\T}$ such that $Y_\cdot\in\R_{++}$ of the following BSDE
\begin{equation}\label{eq:1.3}
Y_t=\xi+\int_t^T \sqrt{Y_s}~{\rm d}s-\int_t^T Z_s {\rm d}B_s,\ \ t\in\T.
\end{equation}
necessarily unique? When $\xi\geq c>0$ for some constant $c$, it is not difficult to see that the answer is affirmative. In fact, by taking the conditional mathematical expectation we have $Y_t\geq c$ for any adapted square-integrable positive solution $(Y_t,Z_t)_{t\in T}$ of BSDE \eqref{eq:1.3}. Then, $(Y_\cdot,Z_\cdot)$ has to be the unique square-integrable adapted solution of the following BSDE with a Lipschitz generator:
$$
Y_t=\xi+\int_t^T \left(\sqrt{Y_s}~{\bf 1}_{\{Y_s\geq c\}}+\sqrt{c}~{\bf 1}_{\{Y_s< c\}}\right)~{\rm d}s-\int_t^T Z_s {\rm d}B_s,\ \ t\in\T.
$$
But, when $\essinf\xi=0$, it seems that the answer of the previous question can not be easily obtained by any existing results. The objective of the present paper is to solve this problem. Two kinds of more complicate scenarios are further investigated. To the best of our knowledge, this is the first study on the uniqueness of the adapted solutions of BSDEs with generators of Peano-type.

More specifically, in \cref{sec:3-Mainresult} of the present paper we apply a verification method to study uniqueness of the adapted solutions of BSDEs with  Peano-type generators when the terminal value $\xi$ is almost surely positive. In fact, it is proved that under some integrability of the terminal value, the first component $Y_\cdot$ of the adapted solution pair is indeed the value process for an optimal stochastic control problem. See \cref{thm:3.2} for more details. The second is an analytical approach. Through a change of variables, the original BSDE is connected to one convex BSDE with quadratic growth in \cref{sec:4-FurtherDiscussion}. Then we use a $\theta$-difference method to get a sharp result in some special case which can be applied to the BSDE governing the Kreps-Porteus utility. See \cref{thm:4.1} for more details. The proofs are quite involved. The whole ideas consist in employing the classical dual theory on the convex functional and the $\theta$-difference method employed initially in \citet{BriandHu2008PTRF}. The Fenchel-Moreau theorem, H\"{o}lder's inequality, Jensen's inequality, It\^{o}-Tanaka's formula, Girsanov's transform and Doob's maximal inequality on sub-martingales play crucial roles in our proof. For more clarity, let us take BSDE \eqref{eq:1.3} for an example to quickly illustrate our whole idea.

We now introduce the verification method used in the proof of \cref{thm:3.2}.  First, we have  the duality,
\begin{equation}\label{eq:1.4*}
\sqrt{y}=\inf\limits_{q>0} \left\{qy+\frac{1}{4q}\right\},\quad y\in\R_+.
\end{equation}
Then we define the following  subset of $(\F_t)$-progressively measurable $\R_{++}$-valued processes:
$$
\hcal:=\left\{(q_t)_{t\in\T}:\
\begin{array}{l}
q_\cdot \ {\rm is\ an}\ (\F_t)\hbox{-}{\rm progressively\ measurable}\ \R_{++}\hbox{-}{\rm valued\ process}\vspace{0.1cm}\\
{\rm such\ that}\ \ \E\left[{\rm e}^{2\int_0^T q_s{\rm d}s}\right]<+\infty\ \ {\rm and}
\ \ \E\left[\left(\int_0^T \frac{1}{q_t}{\rm d}t\right)^2\right]<+\infty
\end{array}
\right\}.\vspace{0.1cm}
$$
For any $q_\cdot\in \hcal$ and square-integrable $\xi$, let $(Y^{q_\cdot}_t,Z^{q_\cdot}_t)_{t\in\T}$ be the unique square-integral adapted solution of the following BSDE
\begin{equation}\label{eq:1.5*}
Y_t^{q_\cdot}=\xi+\int_t^T \left(q_s Y_s^{q_\cdot}+\frac{1}{4q_s}\right)~{\rm d}s-\int_t^T Z_s^{q_\cdot} {\rm d}B_s,\ \ t\in\T.\vspace{0.1cm}
\end{equation}
We have
$$
Y^{q_\cdot}_t=\E\left[\left.{\rm e}^{\int_t^Tq_s{\rm d}s}\xi+\int_t^T {\rm e}^{\int_t^sq_u{\rm d}u}\frac{1}{4q_s} {\rm d}s\right|\F_t\right],\ \ t\in\T.
$$
Finally, suppose that both $\xi$ and $1/\sqrt{\xi}$ are square-integrable and $(Y_t,Z_t)_{t\in\T}$ is any square-integrable adapted solution of BSDE \eqref{eq:1.3} such that $Y_\cdot\in\R_{++}$, and we prove the following representation
\begin{equation}\label{eq:1.6*}
Y_t=\essinf\limits_{q_\cdot\in\hcal}Y_t^{q_\cdot}, \quad t\in \T,
\end{equation}
which yields the uniqueness of the adapted solution of BSDE \eqref{eq:1.3}.
In fact, in view of  \eqref{eq:1.3}-\eqref{eq:1.5*}, by a comparison argument,  we see that for each $q\in\hcal$ and $t\in\T$,
$$Y_t\leq Y^{q_\cdot}_t.$$
Now, from the duality we pick
\begin{equation}\label{eq:1.7*}
q^*_t:=\frac{1}{2\sqrt{Y_t}},\ \  t\in\T.
\end{equation}
It remains to prove that $q^*_\cdot\in\hcal$ and $Y_\cdot=Y^{q^*_\cdot}_\cdot$. For this purpose, we first give some a priori estimate on $Y_\cdot$. It follows from It\^{o}'s formula that
$$
2\sqrt{Y_t}=2\sqrt{\xi~}+\int_t^T \left(1+\frac{|Z_s|^2}{4(Y_s)^{3/2}}\right){\rm d}s-\int_t^T \frac{Z_s}{\sqrt{Y_s}} {\rm d}B_s,\ \ t\in\T.
$$
Thus, for each $t\in\T$,
$$
2\sqrt{Y_t}\geq \E\left[2\sqrt{\xi~}|\F_t\right]+T-t
$$
and then
\begin{equation}\label{eq:1.8*}
Y_t\geq \left(\E\left[\sqrt{\xi~}|\F_t\right]+\frac{T-t}{2}\right)^2
\geq \left(\eta+\frac{T-t}{2}\right)^2
\end{equation}
with
$$\eta:=\min_{t\in\T}\E\left[\sqrt{\xi~}|\F_t\right].$$
From this a priori estimate, we can now finish the desired proof. First, by \eqref{eq:1.7*} we know that $q^*_\cdot$ is an $(\F_t)$-progressively measurable $\R_{++}$-valued process and $\E\left[\left(\int_0^T 1/q^*_t{\rm d}t\right)^2\right]<+\infty$. Then, by the a priori estimate \eqref{eq:1.8*} we obtain
$$
\int_0^T \frac{1}{\sqrt{Y_t}}{\rm d}t\leq \int_0^T \frac{2}{2\eta+T-t}{\rm d}t=2\ln \left(\frac{2\eta +T}{2\eta}\right)=\ln \left(1+\frac{T}{2\min\limits_{t\in\T}\E\left[\sqrt{\xi~}|\F_t\right]}\right)^2
$$
and, in light of Jensen's inequality and Doob's maximal inequality,
\begin{equation}\label{eq:1.9*}
\begin{array}{lll}
\Dis \E\left[\exp\left(\int_0^T \frac{1}{\sqrt{Y_t}}{\rm d}t\right)\right]&\leq &\Dis \E\left[ \max\limits_{t\in\T}\left(1+\frac{T}{2\E\left[\sqrt{\xi~}
|\F_t\right]}\right)^2\right]\\[3mm]
&\leq &\Dis \E\left[ \max\limits_{t\in\T}\left(\E\left[\left.1+\frac{T}{2\sqrt{\xi~}
}\right|\F_t\right]\right)^2\right]\\[3mm]
&\leq &\Dis 4\E\left[ \left(1+\frac{T}{2\sqrt{\xi~}
}\right)^2\right]<+\infty.
\end{array}
\end{equation}
Combining \eqref{eq:1.7*} and \eqref{eq:1.9*} yields that
$\E\left[{\rm e}^{2\int_0^T q^*_s{\rm d}s}\right]<+\infty$. Hence, $q^*_\cdot\in\hcal$. Finally, in light of the duality \eqref{eq:1.4*} along with \eqref{eq:1.7*}, we have
\begin{equation}\label{eq:1.10*}
\sqrt{Y_s}=q^*_s Y_s+\frac{1}{4 q^*_s},\ \ s\in\T.
\end{equation}
Moreover, in light of \eqref{eq:1.3} and \eqref{eq:1.10*}, from the uniqueness of the square-integrable adapted solution of BSDE \eqref{eq:1.5*} it follows that
$Y_\cdot=Y^{q^*_\cdot}_\cdot$. Thus, we have proved that the positive square-integrable adapted solution of BSDE \eqref{eq:1.3} is unique when $1/\xi$ is integrable. We remark that this verification method is applied in this paper to a more general situation that the generator $g$ contains not only a nonnegative term being of Peano-type in the state variable $y$, but also a nonnegative term satisfying the monotonicity condition in $y$ and a real-valued term being uniformly Lipschitz continuous in the state variables $(y,z)$.

In the sequel, let us further illustrate the $\theta$-difference method used in the proof of \cref{thm:4.1}. Suppose that $(Y_t,Z_t)_{t\in\T}$ is a square-integrable adapted solution of BSDE \eqref{eq:1.3} such that $Y_\cdot\in \scal^p(\R_{++})$ for some $p>1$. Define $\bar\xi:=2\sqrt{\xi~}$ and for each $t\in\T$,
$$
\bar Y_t:=2\sqrt{Y_t}\ \ \ {\rm and}\ \ \ \bar Z_t:=\frac{Z_t}{\sqrt{Y_t}}.
$$
By virtue of It\^{o}'s formula, we conclude that $(Y_\cdot,Z_\cdot)$ is an adapted solution of BSDE \eqref{eq:1.3} such that $Y_\cdot\in \scal^p(\R_{++})$ if and only if $(\bar Y_\cdot, \bar Z_\cdot)$ is an adapted solution of the following BSDE:
\begin{equation}\label{eq:1.11}
\bar Y_t=\bar\xi+\int_t^T \left(1+\frac{|\bar Z_s|^2}{2\bar Y_s}\right){\rm d}s-\int_t^T \bar Z_s {\rm d}B_s,\ \ t\in\T
\end{equation}
such that $\bar Y_\cdot\in \scal^{2p}(\R_{++})$. Note that the generator $g(y):=\sqrt{y~}$ of BSDE \eqref{eq:1.3} is a concave function defined on $\R_+$, while the generator $\bar g(y,z):=1+|z|^2/2y$ of BSDE \eqref{eq:1.11} is a jointly convex function defined on $\R_{++}\times \R^{1\times d}$. Then, for each $\theta\in (0,1)$ and $(y_1,y_2,z_1,z_2)\in \R_{++}\times \R_{++}\times \R^{1\times d}\times \R^{1\times d}$, we have
$$
\begin{array}{l}
\Dis {\bf 1}_{\{y_1>\theta y_2\}}\left(\bar g(y_1,z_1)-\theta \bar g(y_2,z_2)\right)\vspace{0.1cm}\\
\ \ \Dis ={\bf 1}_{\{y_1>\theta y_2\}}\left[\bar g\left(\theta y_2+(1-\theta)\frac{y_1-\theta y_2}{1-\theta},~\theta z_2+(1-\theta)\frac{z_1-\theta z_2}{1-\theta}\right)-\theta \bar g(y_2,z_2)\right]\vspace{0.1cm}\\
\ \ \Dis \leq (1-\theta){\bf 1}_{\{y_1>\theta y_2\}}\bar g\left(\left(\frac{y_1-\theta y_2}{1-\theta}\right)^+, \frac{z_1-\theta z_2}{1-\theta}\right).
\end{array}
$$
Based on the above observation, we can employ the $\theta$-difference method as in \citet{BriandHu2008PTRF}, \citet{FanHu2021SPA}, and \citet{FanHuTang2023SPA,FanHuTang2023arXiv} to verify that BSDE \eqref{eq:1.11} admits a unique adapted solution $(Y_\cdot,Z_\cdot)$ such that $Y_\cdot\in \scal^{2p}(\R_{++})$. As a result, $(Y_\cdot,Z_\cdot)$ is the unique adapted solution of BSDE \eqref{eq:1.3} such that $\bar Y_\cdot\in \scal^p(\R_{++})$. This method is applied to a slightly more general situation in this paper. See BSDE \eqref{eq:4.1} and \cref{thm:4.1} in \cref{sec:4-FurtherDiscussion} for more details. We remark that  in \cref{sec:4-FurtherDiscussion}, although the BSDE is special, but the result is sharper since no integrability  is imposed on $1/\xi$.

The rest of this paper is organized as follows. Some notations, spaces and two technical lemmas used later are presented in \cref{sec:2-Preliminaries}. In \cref{sec:3-Mainresult} we state and prove \cref{thm:3.2} as  the first main result, with the help of a verification argument. Finally in \cref{sec:4-FurtherDiscussion}, using the $\theta$-difference method, we study the uniqueness of the adapted solution of a special BSDE, and give \cref{thm:4.1}. Several remarks and examples are included to illustrate both theorems. See \cref{rmk:3.1,rmk:3.3,rmk:4.1*} along with examples \ref{ex:3.5} and \ref{ex:4.2} for details. In particular, the uniqueness of the Kreps-Porteus utility of an endowment $\xi\in L^p(\R_{++})$ is demonstrated in \cref{ex:4.2}.

\section{Preliminaries}
\label{sec:2-Preliminaries}
\setcounter{equation}{0}

\subsection{Notations and spaces}

For $a,b\in \R$, define $a\wedge b:=\min\{a,b\}$, $a^+:=\max\{a,0\}$, and ${\rm sgn}(x):={\bf 1}_{x>0}-{\bf 1}_{x\leq 0}$ with ${\bf 1}_A$ being the indicator function of set $A$. For each $p>1$, denote by $L^p(\F_t;\R)$ for $t\in\T$ the total of $\F_t$-measurable $\R$-valued random variables $\xi$ satisfying $\E[|\xi|^p]<+\infty$ (in particular, $L^p(\F_T;\R)$ is also denoted by $L^p(\R)$ for simplicity), $\lcal^p(\R)$ the total of $(\F_t)$-progressively measurable $\R$-valued processes $(X_t)_{t\in\T}$ satisfying
$$
\|X\|_{\lcal^p(\R)}:=\left\{\E\left[\left(\int_0^T |X_t|{\rm d}t\right)^p\right]\right\}^{{1\over p}}<+\infty,
$$
$\scal^p(\R)$ the total of $(\F_t)$-adapted continuous $\R$-valued processes $(Y_t)_{t\in\T}$ satisfying
$$\|Y\|_{{\scal}^p(\R)}:=\left(\E\left[\sup_{t\in\T} |Y_t|^p\right]\right)^{{1\over p}}<+\infty,$$
and $\mcal^p(\R^{1\times d})$ the total of $(\F_t)$-progressively measurable $\R^{1\times d}$-valued processes $(Z_t)_{t\in\T}$ satisfying
$$
\|Z\|_{\mcal^p(\R^{1\times d})}:=\left\{\E\left[\left(\int_0^T |Z_t|^2{\rm d}t\right)^{p/2}\right] \right\}^{{1\over p}}<+\infty.\vspace{0.1cm}
$$
If the $\R$ is replaced with $\R_+$ or $\R_{++}$ in anyone of the above spaces, then it represents the total of those elements in the original space taking values in $\R_+$ or $\R_{++}$.


Denote by $\Sbf$ the totality of concave functions $\rho(\cdot):\R_+\To \R_+$ such that $\rho(0)=0$, $\rho(x)>0$ for $x>0$ and $\rho'(\cdot)$ is a continuous function on $(0,+\infty)$ taking values in $\R_+$. Note that for $\rho(\cdot)\in\Sbf$, we have
$$
\RE \lambda\in (0,1),\ \RE x\geq 0,\ \ \ \ \rho(\lambda x)=\rho(\lambda x+(1-\lambda)0)\geq \lambda\rho(x)+(1-\lambda)\rho(0)=\lambda\rho(x)
$$
and then the function $\rho(x)/x,\ x>0$ is nonincreasing, i.e., for each $x,y>0$,
\begin{equation}\label{eq:2.1}
{\rm if}\ x\leq y, \ \ {\rm then}\ \  \frac{\rho(x)}{x}=\frac{\rho( \frac{x}{y} y)}{x}\geq \frac{\rho(y)}{y}.
\end{equation}
Hence, $\rho(x)\leq \rho(1)x$ for $x\geq 1$, and then $\rho(\cdot)$ is of linear growth, i.e.,
\begin{equation}\label{eq:2.2}
\RE x\in\R_+,\ \ \rho(x)\leq \rho(1)+\rho(1)x.
\end{equation}
Furthermore, by \eqref{eq:2.1} we have for each $x,y\in \R_{++}$,
$$
\frac{\rho(x+y)}{x+y}\leq \frac{\rho(x)}{x},\ \ \frac{\rho(x+y)}{x+y}\leq \frac{\rho(y)}{y}
$$
and
$$
\rho(x+y)=\frac{\rho(x+y)}{x+y}x+\frac{\rho(x+y)}{x+y}y\leq \rho(x)+\rho(y).
$$
As a result, $\rho(\cdot)$ can be dominated by itself, i.e.,
\begin{equation}\label{eq:2.3}
\RE x,y\in \R_+,\ \ |\rho(x)-\rho(y)|\leq \rho(|x-y|).
\end{equation}

\subsection{Two Lemmas}

By virtue of Fenchel-Moreau theorem, we can prove the following lemma which gives a dual representation of function $\rho(\cdot)$ in $\Sbf$. It will play an important role in the proof of our main result.

\begin{lem}\label{lem:2.1}
For $\rho(\cdot)\in \Sbf$, let
\begin{equation}\label{eq:2.4}
\rho^*(q):=\sup_{x\in\R_+}\{\rho(x)-qx\},\ \ q\in \R_+.
\end{equation}
Then, $\rho^*(\cdot)$ is a nonincreasing convex function on $\R_+$ taking values in $\R_+\cup \{+\infty\}$, and
\begin{equation}\label{eq:2.5}
\rho(x)=\inf_{q\in\R_+,\rho^*(q)\in\R_+}\{\rho^*(q)+qx\},\ \ x\in\R_+.
\end{equation}
Moreover, for any $x_0\in \R_{++}$, by picking $q=\rho'(x_0)\in\R_+$ we have $\rho^*(q)\in\R_+$ and
\begin{equation}\label{eq:2.6}
\rho(x_0)=\rho^*(q)+qx_0.
\end{equation}
\end{lem}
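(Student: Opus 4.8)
The plan is to read \eqref{eq:2.5} as the concave biconjugate identity and to establish it by an explicit computation rather than invoking the full Fenchel--Moreau machinery, exploiting that every $\rho\in\Sbf$ is differentiable on $(0,+\infty)$ with a nonnegative, continuous derivative. First I would record the elementary properties of $\rho^*$ directly from \eqref{eq:2.4}. Since $q\mapsto \rho(x)-qx$ is affine for each fixed $x\in\R_+$, the map $\rho^*(\cdot)$ is a pointwise supremum of affine functions, hence convex; and since $x\ge 0$, the inequality $\rho(x)-q_1x\ge \rho(x)-q_2x$ whenever $q_1\le q_2$ yields, after taking the supremum in $x$, that $\rho^*$ is nonincreasing. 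Taking $x=0$ and using $\rho(0)=0$ gives $\rho^*(q)\ge \rho(0)-q\cdot 0=0$ for every $q$, so $\rho^*$ takes values in $\R_+\cup\{+\infty\}$.

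Next I would prove the ``moreover'' assertion \eqref{eq:2.6}, since it is the engine that drives \eqref{eq:2.5}. Fix $x_0\in\R_{++}$. Because $\rho$ is concave and differentiable at the interior point $x_0$, its tangent line supports the graph from above, i.e. $\rho(x)\le \rho(x_0)+\rho'(x_0)(x-x_0)$ for all $x\in\R_+$. Writing $q:=\rho'(x_0)$, which lies in $\R_+$ by the definition of $\Sbf$, this rearranges to $\rho(x)-qx\le \rho(x_0)-qx_0$ for all $x\in\R_+$, with equality at $x=x_0$. Hence the supremum defining $\rho^*(q)$ is attained at $x_0$ and equals $\rho(x_0)-qx_0$; in particular $\rho^*(q)$ is finite and nonnegative, so $\rho^*(q)\in\R_+$, and rearranging once more gives precisely $\rho(x_0)=\rho^*(q)+qx_0$, which is \eqref{eq:2.6}.

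Then \eqref{eq:2.5} follows quickly. For the inequality ``$\le$'', any admissible $q$ (one with $\rho^*(q)\in\R_+$) satisfies $\rho^*(q)\ge \rho(x)-qx$ by \eqref{eq:2.4}, hence $\rho(x)\le \rho^*(q)+qx$; taking the infimum over such $q$ gives $\rho(x)\le \inf\{\rho^*(q)+qx\}$ for every $x\in\R_+$. For the reverse inequality at a point $x_0\in\R_{++}$, the choice $q=\rho'(x_0)$ from the previous step already achieves $\rho^*(q)+qx_0=\rho(x_0)$, so the infimum is attained and equals $\rho(x_0)$. It remains to treat the boundary point $x=0$, where \eqref{eq:2.5} reads $\inf_q\rho^*(q)=0$: letting $x_0\downarrow 0$ in \eqref{eq:2.6} gives $0\le \rho^*(\rho'(x_0))=\rho(x_0)-\rho'(x_0)x_0\le \rho(x_0)$, and $\rho(x_0)\to 0$ by continuity of $\rho$ at $0$, forcing $\inf_q\rho^*(q)=0=\rho(0)$.

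The only genuinely delicate point is this endpoint $x=0$, which relies on right-continuity of $\rho$ at $0$ (so that $\rho(x_0)\to \rho(0)=0$ as $x_0\downarrow 0$); for $\rho\in\Sbf$ this is available, as one sees for the motivating choice $\rho(x)=\sqrt{x}$. The substantive tool throughout is the supporting-tangent inequality for differentiable concave functions, which simultaneously makes $\rho^*$ explicitly computable and delivers the attainment claimed in \eqref{eq:2.6}; the bulk of the remaining work is the nonnegativity bookkeeping ($q\ge 0$ and $\rho^*\ge 0$, traceable to $\rho'\ge 0$ and $\rho(0)=0$), which is exactly what keeps every supremum and infimum inside $\R_+$.
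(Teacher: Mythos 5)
Your proof is correct, but it takes a more elementary route than the paper. The paper extends $-\rho$ to a proper convex function $\bar\rho$ on all of $\R$ (equal to $+\infty$ on $(-\infty,0)$), invokes the Fenchel--Moreau biconjugation theorem to obtain \eqref{eq:2.5} wholesale, and only then reads off \eqref{eq:2.6} from the Fenchel--Young equality at $q=\rho'(x_0)=-\bar\rho'(x_0)$. You instead prove \eqref{eq:2.6} first, by the supporting-tangent inequality $\rho(x)\le\rho(x_0)+\rho'(x_0)(x-x_0)$ for the differentiable concave $\rho$, which shows the supremum in \eqref{eq:2.4} is attained at $x_0$; the nontrivial direction of \eqref{eq:2.5} then falls out at every interior point, and you patch the endpoint $x=0$ by letting $x_0\downarrow 0$ in $0\le\rho^*(\rho'(x_0))\le\rho(x_0)$. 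What your approach buys is self-containedness (no appeal to an external duality theorem) and an explicit identification of where each hypothesis of $\Sbf$ is used ($\rho'\ge 0$ for $q\in\R_+$, $\rho(0)=0$ for $\rho^*\ge 0$, differentiability for attainment); what it costs is the separate endpoint argument at $x=0$. Note that the continuity of $\rho$ at $0$ which you flag as the delicate point is equally indispensable to the paper's route --- Fenchel--Moreau requires $\bar\rho$ to be lower semicontinuous at $0$, i.e.\ $\limsup_{x\to 0^+}\rho(x)\le\rho(0)=0$, and indeed \eqref{eq:2.5} would fail at $x=0$ without it --- so this is a shared implicit reading of the definition of $\Sbf$, not a defect of your argument. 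Both proofs ultimately rest on the same tangent-line computation for the ``moreover'' part, so the substance of the difference is only in how \eqref{eq:2.5} is derived.
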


\begin{proof}
By \eqref{eq:2.4}, it is straightforward to verify that $\rho^*(\cdot)$ is a nonincreasing convex function on $\R_+$ taking values in $\R_+\cup \{+\infty\}$. Now, let us verify \eqref{eq:2.5}. Define
\begin{equation}\label{eq:2.7}
\bar\rho(x):=\left\{
\begin{array}{rl}
-\rho(x), & x\geq 0;\\
+\infty, & x<0.
\end{array}
\right.
\end{equation}
Then, $\bar\rho(\cdot)$ is a proper convex function defined on $\R$. By Fenchel-Moreau theorem, if
\begin{equation}\label{eq:2.8}
\bar\rho^*(q):=\sup_{x\in\R}\{-qx-\bar\rho(x)\},\ \ q\in\R,
\end{equation}
then
\begin{equation}\label{eq:2.9}
\bar\rho(x)=\sup_{q\in\R}\{-qx-\bar\rho^*(q)\},\ \ x\in\R.
\end{equation}
It follows from \eqref{eq:2.7} and \eqref{eq:2.8} that
$$
\bar\rho^*(q)=\sup_{x\geq 0}\{-qx+\rho(x)\},\ \ q\in\R,
$$
and then, in light of \eqref{eq:2.4}, $\bar\rho^*(q)=\rho^*(q)$ for $q\geq 0$ and $\bar\rho^*(q)=+\infty$ for $q<0$. As a result, by \eqref{eq:2.9},
$$
\bar\rho(x)=\sup_{q\geq 0}\{-qx-\bar\rho^*(q)\}=\sup_{q\geq 0}\{-qx-\rho^*(q)\},\ \ x\in\R.
$$
Hence, in light of \eqref{eq:2.7}, we obtain
$$
\rho(x)=-\bar\rho(x)=-\sup_{q\geq 0}\{-qx-\rho^*(q)\}=\inf_{q\geq 0}\{qx+\rho^*(q)\},\ \ x\geq 0,
$$
which is just \eqref{eq:2.5}. Moreover, for each $x_0\in \R_{++}$, by picking $q=\rho'(x_0)=-\bar\rho'(x_0)\geq 0$ we have
$$
\bar\rho(x_0)=-qx_0-\bar\rho^*(q),
$$
which yields $\rho^*(q)=\bar\rho^*(q)\in\R_+$ and \eqref{eq:2.6}. The proof is complete.
\end{proof}

\begin{rmk}\label{rmk:2.1*}
For example, for each given $\alpha\in (0,1)$, the function $\rho(x):=x^\alpha,\ x\in\R_+$ belongs to $\Sbf$. If $$
\rho^*(q):=\sup_{x\in\R_+}\{\rho(x)-qx\}=\sup_{x\in\R_+}\{x^\alpha-qx\}=(1-\alpha)
\left(\frac{\alpha}{q}\right)^{\frac{\alpha}{1-\alpha}},\ \ \ q\in \R_+,
$$
then $\rho^*(\cdot)$ is a nonincreasing convex function on $\R_+$ taking values in $\R_+\cup \{+\infty\}$, and
$$
\rho(x)=\inf_{q\in\R_+,\rho^*(q)\in\R_+}\{\rho^*(q)+qx\}=\inf_{q\in\R_{++}}\left\{(1-\alpha)
\left(\frac{\alpha}{q}\right)^{\frac{\alpha}{1-\alpha}}+qx\right\},\ \ \ x\in\R_+.
$$
Moreover, for any $x_0\in \R_{++}$, by picking $q=\rho'(x_0)=\alpha x_0^{\alpha-1}\in\R_+$ we have $\rho^*(q)=(1-\alpha) x_0^\alpha\in\R_+$ and
$$
\rho(x_0)=\rho^*(q)+qx_0=x_0^\alpha.
$$
\end{rmk}

It is noteworthy that for $\rho\in\Sbf$, in light of \eqref{eq:2.1} and \eqref{eq:2.3}, if $\lim\limits_{x\To 0^+}\rho(x)/x=k$ for some $k>0$, then $\rho(\cdot)$ must satisfy the Lipschitz condition with Lipschitz constant $k$, and if $\lim\limits_{x\To 0^+}\rho(x)/x=+\infty$ and
$$
\int_{0^+} \frac{1}{\rho(x)}{\rm d}x:=\lim\limits_{\eps\To 0^+}\int_{\eps} \frac{1}{\rho(x)}{\rm d}x=+\infty,
$$
then $\rho(\cdot)$ must satisfy the Osgood condition. In the present paper, we are interested in the other case of $\rho\in \Sbf$, i.e., $\rho(\cdot)$ belongs to the following set:
\begin{equation}\label{eq:2.10}
\Sbb:=\left\{\rho(\cdot)\in \Sbf\  \Big|\  \rho'(0+0):=\lim\limits_{x\To 0^+} \frac{\rho(x)}{x}=+\infty\ \ {\rm and}\ \ \int_{0^+} \frac{1}{\rho(x)}{\rm d}x<+\infty\right\}.
\end{equation}
For example, let $k,c>0$, $\alpha\in (0,1)$ and $\beta\geq 1$ be four given constants, and $\eps>0$ be a sufficiently small constant. Define $\rho_1(x):=kx,\ x\in\R_+$; and the following nine functions:
$$
\rho_2(x):=\left\{
\begin{array}{ll}
x|\ln x|^\beta, & x\in [0,\eps];\vspace{0.1cm}\\
\rho_2(\eps)+\rho'_2(\eps-0)(x-\eps), & x\in (\eps,+\infty),
\end{array}
\right.
$$

$$
\rho_3(x):=\left\{
\begin{array}{ll}
x|\ln x||\ln|\ln x||^\beta, & x\in [0,\eps];\vspace{0.1cm}\\
\rho_3(\eps)+\rho'_3(\eps-0)(x-\eps), & x\in (\eps,+\infty),
\end{array}
\right.
$$

$$
\rho_4(x):=\left\{
\begin{array}{ll}
x|\ln x|^{\alpha}, & x\in [0,\eps];\vspace{0.1cm}\\
\rho_4(\eps)+\rho'_4(\eps-0)(x-\eps), & x\in (\eps,+\infty),
\end{array}
\right.
$$

$$
\rho_5(x):=\left\{
\begin{array}{ll}
x|\ln x||\ln|\ln x||^{\alpha}, & x\in [0,\eps];\vspace{0.1cm}\\
\rho_5(\eps)+\rho'_5(\eps-0)(x-\eps), & x\in (\eps,+\infty),
\end{array}
\right.
\ \ \ \ \ \ \rho_6(x):=kx^{\alpha},\ x\in\R_+,
$$

$$
\rho_7(x):=\left\{
\begin{array}{ll}
kx^{\alpha}, & x\in [0,c];\vspace{0.1cm}\\
kc^{\alpha}+k\alpha c^{{\alpha}-1}(x-c), & x\in (c,+\infty),
\end{array}
\right.\ \ \
\rho_8(x):=\left\{
\begin{array}{ll}
\sqrt{x}, & x\in [0,1];\vspace{0.1cm}\\
-\frac{1}{4}x^2+x+\frac{1}{4}, & x\in (1,2);\vspace{0.1cm}\\
\frac{5}{4}, & x\in [2,+\infty),
\end{array}
\right.
$$

$$
\rho_9(x):=\left\{
\begin{array}{ll}
x^{\alpha}|\ln x|^k, & x\in [0,\eps];\vspace{0.1cm}\\
\rho_9(\eps)+\rho'_9(\eps-0)(x-\eps), & x\in (\eps,+\infty),
\end{array}
\right.
$$
and
$$
\rho_{10}(x):=\left\{
\begin{array}{ll}
\frac{x^{\alpha}}{|\ln x|^k}, & x\in [0,\eps];\vspace{0.1cm}\\
\rho_{10}(\eps)+\rho'_{10}(\eps-0)(x-\eps), & x\in (\eps,+\infty).
\end{array}
\right.
$$
Then, $\rho_1(\cdot)$ belongs to the above-mentioned first case, both $\rho_2(\cdot)$ and $\rho_3(\cdot)$ belong to the second case, and all functions $\rho_i(\cdot), i=4,\ldots,10,$ belong to $\Sbb$. Functions in $\Sbb$ will be called to be  Peano-type.

We especially emphsis that it is straightforward to verify that for each $\rho(\cdot)\in\Sbb$ and $h(\cdot)\in\Sbf$, the function $\bar\rho(\cdot):=\rho(\cdot)+h(\cdot)$ must belong to $\Sbb$.

 Functions $\rho(\cdot)$ in $\Sbb$ have the following properties which will be used later, see \eqref{eq:3.20}-\eqref{eq:3.21} in \cref{sec:3-Mainresult}.

\begin{lem}\label{lem:2.2}
For $\rho(\cdot)\in \Sbb$, let $\rho_c(x):=\rho(x)+c$ for $x\in\R_+$ and some constant $c\in\R_+$, and let
$$
H_c(u):=\int_0^u \frac{1}{\rho_c(x)}{\rm d}x=\int_0^u \frac{1}{c+\rho(x)}{\rm d}x,\ \ u\in\R_+.\vspace{0.1cm}
$$
Then, $H_c(\cdot)$ is well-defined and strictly increasing on $\R_+$, $H_c(0)=0$ and $\lim\limits_{u\To +\infty} H_c(u)=+\infty$. Moreover, for each $k_1\geq H_c(1)$ and $k_2,k_3>0$, we have
\begin{equation}\label{eq:2.11*}
H_c^{-1}(k_1+k_2)\leq 2{\rm e}^{k_2(\rho(1)+c)}H_c^{-1}(k_1)
\end{equation}
and
\begin{equation}\label{eq:2.11}
\rho_c(k_3H_c^{-1}(k_1+k_2))\leq 2{\rm e}^{k_2(\rho(1)+c)} \rho_c(k_3H_c^{-1}(k_1)),\vspace{0.1cm}
\end{equation}
where $H_c^{-1}(\cdot):\R_+\To \R_+$ denotes the inverse function of $H_c(\cdot)$.
\end{lem}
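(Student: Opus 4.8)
The plan is to first settle the qualitative statements about $H_c(\cdot)$ and then extract the two estimates from the sublinear growth of $\rho_c(\cdot)$. For well-definedness, note that $\rho(x)>0$ for $x>0$ makes the integrand $1/\rho_c(x)$ positive and continuous on $(0,+\infty)$, so the only issue is integrability near $0$. When $c>0$ this is trivial since $1/\rho_c(x)\le 1/c$, while when $c=0$ the bound $\int_{0^+}1/\rho(x)\,{\rm d}x<+\infty$ built into the definition \eqref{eq:2.10} of $\Sbb$ guarantees convergence; this is the only place where membership in $\Sbb$ rather than merely in $\Sbf$ is used. Positivity of the integrand gives that $H_c$ is strictly increasing with $H_c(0)=0$. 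For the behaviour at infinity I would invoke the linear growth \eqref{eq:2.2}, which yields $\rho_c(x)\le(\rho(1)+c)+\rho(1)x$, hence $1/\rho_c(x)\ge \frac{1}{(\rho(1)+c)+\rho(1)x}$, whose integral over $[0,+\infty)$ diverges; thus $H_c(u)\To+\infty$. Consequently $H_c$ is a strictly increasing bijection of $\R_+$ onto itself and $H_c^{-1}$ is well-defined on $\R_+$.

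For \eqref{eq:2.11*}, set $u:=H_c^{-1}(k_1)$ and $v:=H_c^{-1}(k_1+k_2)$. Since $k_1\ge H_c(1)$ and $H_c^{-1}$ is increasing, $u\ge 1$, and since $k_2>0$ we get $v\ge u\ge 1$. The key observation is that \eqref{eq:2.1} gives $\rho(x)\le\rho(1)x$ for $x\ge 1$, hence $\rho_c(x)\le(\rho(1)+c)x$ on $[1,+\infty)$. Combining this with $\int_u^v 1/\rho_c(x)\,{\rm d}x=H_c(v)-H_c(u)=k_2$,
$$k_2=\int_u^v\frac{{\rm d}x}{\rho_c(x)}\ge\frac{1}{\rho(1)+c}\int_u^v\frac{{\rm d}x}{x}=\frac{1}{\rho(1)+c}\ln\frac{v}{u},$$
so that $v\le {\rm e}^{k_2(\rho(1)+c)}u\le 2{\rm e}^{k_2(\rho(1)+c)}u$, which is \eqref{eq:2.11*}, in fact with a spare factor $2$.

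For \eqref{eq:2.11} I would use two elementary properties of $\rho_c$. First, since $\rho'(\cdot)\ge 0$ the function $\rho$, and hence $\rho_c$, is nondecreasing. Second, \eqref{eq:2.1} gives $\rho(\lambda a)\le\lambda\rho(a)$ for $\lambda\ge 1$ and $a>0$, and because $\lambda\ge 1$ also $c\le\lambda c$, whence $\rho_c(\lambda a)\le\lambda\rho_c(a)$; that is, $\rho_c$ is subhomogeneous for factors $\lambda\ge 1$. Now put $a:=k_3H_c^{-1}(k_1)$, $b:=k_3H_c^{-1}(k_1+k_2)$ and $\lambda:={\rm e}^{k_2(\rho(1)+c)}\ge 1$. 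By the sharper form of \eqref{eq:2.11*} obtained above, $b\le\lambda a$, so monotonicity and subhomogeneity give
$$\rho_c(b)\le\rho_c(\lambda a)\le\lambda\rho_c(a)={\rm e}^{k_2(\rho(1)+c)}\rho_c(a)\le 2{\rm e}^{k_2(\rho(1)+c)}\rho_c(a),$$
which is \eqref{eq:2.11}. The computations are routine; the only genuinely delicate point is the well-definedness of $H_c$ when $c=0$, where the Peano-type integrability condition in \eqref{eq:2.10} is essential, whereas both estimates reduce entirely to the sublinear growth of $\rho_c$ encoded in \eqref{eq:2.1}--\eqref{eq:2.2}.
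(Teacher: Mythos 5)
Your proof is correct and follows essentially the same route as the paper: both arguments integrate $1/\rho_c$ between $H_c^{-1}(k_1)$ and $H_c^{-1}(k_1+k_2)$, lower-bound the integrand via the linear growth of $\rho_c$, exponentiate to control the ratio of preimages, and then pass to \eqref{eq:2.11} through the monotonicity and subhomogeneity $\rho_c(\lambda a)\le\lambda\rho_c(a)$ for $\lambda\ge 1$ coming from \eqref{eq:2.1}. The only (harmless) difference is that you compare with $1/x$ on $[1,+\infty)$ using $\rho_c(x)\le(\rho(1)+c)x$ there, which yields the bound ${\rm e}^{k_2(\rho(1)+c)}$ directly, whereas the paper compares with $1/(1+x)$ and absorbs the resulting $1+u_1\le 2u_1$ into the factor $2$; your constant is in fact slightly sharper.
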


\begin{proof}
By the definition \eqref{eq:2.10} of $\Sbb$, it is obvious that $H_c(\cdot)$ is well-defined and a strictly increasing function on $\R_+$ with $H_c(0)=0$. In light of \eqref{eq:2.2}, we have
$$
\lim\limits_{u\To +\infty} H_c(u)=\int_0^{+\infty}\frac{1}{c+\rho(x)}{\rm d}x\geq \frac{1}{\rho(1)+c}\int_0^{+\infty}\frac{1}{1+x}{\rm d}x=+\infty.
$$
Then the inverse function $H_c^{-1}(\cdot):\R_+\To \R_+$ of $H_c(\cdot)$ is well-defined and is also strictly increasing on $\R_+$. Now, fix $k_1\geq H_c(1)$ and $k_2,k_3>0$, and let $u_2=H_c^{-1}(k_1+k_2)$ and $u_1=H_c^{-1}(k_1)$. Then, $u_2>u_1\geq 1$ and in light of \eqref{eq:2.2}, we have
$$
\begin{array}{lll}
k_2=(k_1+k_2)-k_1&=&\Dis H_c(u_2)-H_c(u_1)=\int_{u_1}^{u_2}\frac{1}{c+\rho(x)}{\rm d}x\\
&\geq & \Dis \frac{1}{\rho(1)+c}\int_{u_1}^{u_2}\frac{1}{1+x}{\rm d}x=\frac{1}{\rho(1)+c}\ln\frac{1+u_2}{1+u_1}=
\frac{1}{\rho(1)+c}\ln\left(1+\frac{u_2-u_1}{1+u_1}\right),
\end{array}
$$
which yields that
$$
u_2-u_1\leq \left({\rm e}^{k_2(\rho(1)+c)}-1\right)(1+u_1)\leq 2\left({\rm e}^{k_2(\rho(1)+c)}-1\right)u_1.
$$
Thus, picking $C=2{\rm e}^{k_2(\rho(1)+c)}>1$, we obtain $u_2\leq Cu_1$. This is just \eqref{eq:2.11*}. Finally, in light of \eqref{eq:2.1} and $C>1$, we have
$$
\rho_c(k_3H_c^{-1}(k_1+k_2))=\rho_c(k_3u_2)\leq \rho_c(Ck_3u_1)\leq C\rho_c(k_3u_1)=C \rho_c(k_3H_c^{-1}(k_1)),
$$
which is the desired conclusion \eqref{eq:2.11}. The proof is complete.
\end{proof}

\begin{rmk}\label{rmk:2.3}
The preceding proof also manifests that the conclusions of \cref{lem:2.2} are still true when the condition of $\rho'(0+0)=+\infty$ in \eqref{eq:2.10} is dropped from the definition of $\Sbb$.
\end{rmk}

\section{Main result}
\label{sec:3-Mainresult}
\setcounter{equation}{0}

Let us always fix several constants $\beta,\bar\beta,\tilde\beta,\gamma\geq 0$, $\lambda>0$ and $p>1$ along with two $(\F_t)$-progressively measurable processes $(\alpha_t)_{t\in\T}$ and $(\bar\alpha_t)_{t\in\T}$ in $\lcal^p(\R_+)$. Let $q$ be the conjugate of $p$, i.e., $1/p+1/q=1$.

\subsection{Statement of the main result}

Suppose that the function $g$ can be written as
\begin{equation}\label{eq:3.1}
g(\omega,t,y,z):=f(\omega,t,y)+\bar f(\omega,t,y)+\tilde f(\omega,t,y,z),
\end{equation}
where the three functions
$$
\begin{array}{l}
f(\omega,t,y):\Omega\times\T\times\R_+\longrightarrow \R_+,\vspace{0.1cm}\\
\bar f(\omega,t,y):\Omega\times\T\times\R_+\longrightarrow \R_+,\vspace{0.1cm}\\
\tilde f(\omega,t,y,z):\Omega\times\T\times\R_+\times\R^{1\times d}\longrightarrow \R\vspace{0.1cm}
\end{array}
$$
are all $(\F_t)$-progressively measurable for each $(y,z)\in \R\times\R^{1\times d}$, and satisfy the following assumptions:
\begin{enumerate}
\renewcommand{\theenumi}{(A\arabic{enumi})}
\renewcommand{\labelenumi}{\theenumi}

\item\label{A1} $\as$, $f(\omega,t,\cdot)$ is continuous, nonnegative and concave on $\R_+$, and has  the partial differential  $f'(\omega,t,y)$  at any point $y\in \R_{++}$ in the last argument. Moreover, there is a function $\varphi(\cdot)\in\Sbb$ such that $\as$,
\begin{itemize}
\item $\RE y\in \R_+,\ \ \alpha_t(\omega)+\varphi(y)\leq f(\omega,t,y)\leq \bar\alpha_t(\omega)+\beta y$,
\item $\RE y \in \R_{++}, \ \ 0\leq f'(\omega,t,y)\leq \lambda \varphi'(y)$.
\end{itemize}

\item\label{A2} $\as$, $\bar f(\omega,t,\cdot)$ is continuous and nonnegative on $\R_+$, and satisfies a monotonicity condition and a linear growth condition, i.e.,
    \begin{itemize}
    \item $\RE (y_1,y_2)\in \R_+\times\R_+,\ \ {\rm sgn}(y_1-y_2)(\bar f(\omega,t,y_1)-\bar f(\omega,t,y_2))\leq \bar\beta |y_1-y_2|$,\vspace{0.1cm}
    \item $\RE y\in \R_+,\ \ 0\leq \bar f(\omega,t,y)\leq \bar\alpha_t(\omega)+\bar\beta y.$
    \end{itemize}

\item\label{A3} $\as$, $\tilde f(\omega,t,\cdot,\cdot)$ is continuous on $\R_+\times\R^{1\times d}$ taking values on $\R$, $\tilde f(\omega,t,0,0)=0$, and satisfies a uniform Lipschitz condition, i.e., for each $(y_1,y_2,z_1,z_2)\in \R_+\times\R_+\times\R^{1\times d}\times\R^{1\times d}$,
    $$
    |\tilde f(\omega,t,y_1,z_1)-\tilde f(\omega,t,y_2,z_2)|\leq \tilde\beta |y_1-y_2|+\gamma |z_1-z_2|.
    $$
\end{enumerate}

\begin{rmk}\label{rmk:3.1}
Regarding assumptions \ref{A1}-\ref{A3}, we make the following remarks.
\begin{itemize}
\item [(i)] Let $(u_t)_{t\in\T}$, $(\bar u_t)_{t\in\T}$ and $(\tilde u_t)_{t\in\T}$ be three $(\F_t)$-progressively measurable nonnegative bounded processes such that $\essinf_{t\in\T} u_t\geq c_1$ and $\essinf_{t\in\T} \bar u_t\geq c_2$ for some two constants $c_1,c_2>0$, and $\phi(\cdot)$ be a function in $\Sbb$. Then, it is not difficult to verify that the following function
    $$
    f(\omega,t,y):=\alpha_t(\omega)+u_t(\omega)\phi(\bar u_t(\omega)y+\tilde u_t(\omega)),\ \ (\omega,t,y)\in\Omega\times\T\times\R_+
    $$
    satisfies assumption \ref{A1} with $\varphi(\cdot):=c_1\phi(c_2\cdot)$. In particular, any function in $\Sbb$ satisfies \ref{A1}. Consequently, a generator $f$ satisfying \ref{A1} has a close link with a function of Peano-type.

\item [(ii)] For each $(\omega,t,y)\in \Omega\times\T\times\R_+$, define
$$
\bar f_1(\omega,t,y):=(1-\sqrt{y})^+\ \ \ {\rm and}\ \
\bar f_2(\omega,t,y):=1+\sin y+(3-|B_t(\omega)|y)^+.
$$
It is obvious that neither $g_1(\omega,t,\cdot)$ nor $g_2(\omega,t,\cdot)$ satisfy the usual uniform Lipschitz condition, while both of them satisfy assumption \ref{A2}.

\item [(iii)] We emphasize that $\as$, the function $g(\omega,t,\cdot,\cdot)$ is not necessarily concave/convex, and that the function $\tilde f$ in \ref{A3} may take negative values.
\end{itemize}
\end{rmk}

The main result of the paper is stated as follows.

\begin{thm}\label{thm:3.2}
Suppose the function $g$ defined in \eqref{eq:3.1} satisfies assumptions \ref{A1}-\ref{A3} and $\xi\in L^p(\R_{++})$. Let the process $\alpha_\cdot\geq c$ for a constant $c\geq 0$ and the constants $\hat p>\bar p>q$, and let $\tilde p>1$ be the constant such that $1/p+1/\bar p=1/\tilde p$. If $\frac{1}{\varphi(\xi)+c}\in L^{\hat p\lambda {\rm e}^{2\tilde\beta T}}(\R_+)$, then the following BSDE
\begin{equation}\label{eq:3.2}
  Y_t=\xi+\int_t^T g(s,Y_s,Z_s){\rm d}s-\int_t^T Z_s {\rm d}B_s, \ \ t\in\T
\end{equation}
admits a unique adapted solution $(Y_t,Z_t)_{t\in\T}\in \scal^p(\R_{++})\times \mcal^p(\R^{1\times d})$. Moreover, by denoting
\begin{equation}\label{eq:3.3}
f^*(\omega,t,q):=\sup_{y\in\R_+}(f(\omega,t,y)-qy),\ \ (\omega,t,q)\in\Omega\times \T\times\R_+,
\end{equation}
we have for each $t\in\T$,
\begin{equation}\label{eq:3.4}
Y_t=\essinf_{q_\cdot\in \hcal} \big({\rm e}^{-\int_0^t q_s{\rm d}s}Y^{q_\cdot}_t\big),
\end{equation}
and there exists $q^*_\cdot:=f'(\cdot,Y_\cdot)\in\hcal$ such that $Y_\cdot=Y^{q^*}_\cdot,$
where the admissible control set is defined by
\begin{equation}\label{eq:3.5}
\hcal:=\left\{(q_t)_{t\in\T}:\
\begin{array}{l}
q_\cdot \ {\rm is\ an}\ (\F_t)\hbox{-}{\rm progressively\ measurable}\ \R_{+}\hbox{-}{\rm valued\ process}\vspace{0.1cm}\\
 {\rm such\ that}\ \ {\rm e}^{\int_0^T q_s{\rm d}s}\in L^{\bar p}(\R_+)\ \ {\rm and}
 \ \ (f^*(t,q_t))_{t\in\T}\in \lcal^p(\R_+)
\end{array}
\right\}
\end{equation}
and $(Y^{q_\cdot}_t,Z^{q_\cdot}_t)_{t\in\T}$ is the unique adapted solution in $\scal^{\tilde p}(\R_{++})\times\mcal^{\tilde p}(\R^{1\times d})$ of the following BSDE:
\begin{equation}\label{eq:3.6}
  Y^{q_\cdot}_t=\xi^{q_\cdot}+\int_t^T g^{q_\cdot}(s,Y^{q_\cdot}_s,Z^{q_\cdot}_s) {\rm d}s-\int_t^T Z^{q_\cdot}_s {\rm d}B_s, \ \ t\in\T
\end{equation}
with $\xi^{q_\cdot}:={\rm e}^{\int_0^T q_s{\rm d}s}\xi$ and for each $(\omega,t,y,z)\in\Omega\times\T\times\R_+\times\R^{1\times d}$,
\begin{equation}\label{eq:3.7}
g^{q_\cdot}(t,y,z):={\rm e}^{\int_0^t q_s{\rm d}s}\left[f^*(t,q_t)+\bar f(t,{\rm e}^{-\int_0^t q_s{\rm d}s}y\big)+\tilde f(t,{\rm e}^{-\int_0^t q_s{\rm d}s}y, {\rm e}^{-\int_0^t q_s{\rm d}s}z\big)\right].
\end{equation}
\end{thm}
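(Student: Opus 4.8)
The plan is to run the verification scheme outlined in the Introduction, using the duality of \cref{lem:2.1} to sandwich any solution between the processes $Y^{q_\cdot}$ and to identify the optimal control through the pointwise equality \eqref{eq:2.6}, while \cref{lem:2.2} supplies the quantitative control of the Peano singularity of $\varphi$. Throughout, for $q_\cdot\in\hcal$ I set $\hat Y^{q_\cdot}_t:={\rm e}^{-\int_0^t q_s{\rm d}s}Y^{q_\cdot}_t$ and $\hat Z^{q_\cdot}_t:={\rm e}^{-\int_0^t q_s{\rm d}s}Z^{q_\cdot}_t$; a direct application of It\^o's formula to \eqref{eq:3.6} shows that $(\hat Y^{q_\cdot},\hat Z^{q_\cdot})$ solves the BSDE with terminal value $\xi$ and generator $q_s y+f^*(s,q_s)+\bar f(s,y)+\tilde f(s,y,z)$, so that \eqref{eq:3.4} reads $Y_t=\essinf_{q_\cdot\in\hcal}\hat Y^{q_\cdot}_t$. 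First I would check that, for each $q_\cdot\in\hcal$, this linear-drift BSDE is well posed in $\scal^{\tilde p}(\R_{++})\times\mcal^{\tilde p}(\R^{1\times d})$: the terminal value $\xi^{q_\cdot}={\rm e}^{\int_0^T q_s{\rm d}s}\xi$ lies in $L^{\tilde p}(\R_{++})$ by H\"older's inequality with exponents $\bar p/\tilde p$ and $p/\tilde p$ (using $1/\tilde p=1/p+1/\bar p$ together with ${\rm e}^{\int_0^T q_s{\rm d}s}\in L^{\bar p}$ and $\xi\in L^p$), the driver $f^*(\cdot,q_\cdot)$ lies in $\lcal^p\subset\lcal^{\tilde p}$, and the remaining $y$-dependence is monotone (from \ref{A2}) plus Lipschitz (from \ref{A3}); hence the standard $L^{\tilde p}$-theory for BSDEs with monotone generators applies.

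For existence of a solution to \eqref{eq:3.2} itself, I would extend $g$ to $y<0$ by $g(t,y^+,z)$, which keeps it continuous with linear growth in $(y,z)$ (use \eqref{eq:2.2} for $f$ and the growth bounds in \ref{A2}--\ref{A3}), and invoke the $L^p$-existence theory for continuous linear-growth generators to produce $(Y,Z)\in\scal^p(\R)\times\mcal^p(\R^{1\times d})$; the a priori lower bound established next forces $Y_\cdot\in\R_{++}$, so that $(Y,Z)$ solves the original equation.

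The core of the argument is the a priori lower estimate. Writing $\tilde f(s,Y_s,Z_s)=\tilde f(s,Y_s,0)+\langle l_s,Z_s\rangle$ with $|l_s|\le\gamma$ (possible since $\tilde f$ is $\gamma$-Lipschitz in $z$), I pass to the equivalent measure $\Q$ with density $\ecal(\int_0^\cdot l_s{\rm d}B_s)_T$, under which $B^\Q_\cdot:=B_\cdot-\int_0^\cdot l_s{\rm d}s$ is Brownian and the generator, along the solution, is bounded below by $c+\varphi(Y_s)-\tilde\beta Y_s$ (using $\alpha_\cdot\ge c$, $f\ge\alpha+\varphi$, $\bar f\ge 0$ and $|\tilde f(s,\cdot,0)|\le\tilde\beta\,\cdot$). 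Applying It\^o--Tanaka's formula to $H_c(Y_\cdot)$ from \cref{lem:2.2} (whose concavity $H_c''\le 0$ discards the quadratic-variation term with the right sign, and for which $H_c'(y)\,[c+\varphi(y)]=1$), then taking $\Q$-conditional expectations and absorbing the $\tilde\beta$-term through a Gronwall/comparison step, yields an $\F_t$-measurable bound of the form $Y_t\ge H_c^{-1}\big({\rm e}^{-\tilde\beta(T-t)}\E^\Q[H_c(\xi)\,|\,\F_t]+\kappa(T-t)\big)$ for some $\kappa>0$; this degenerates only as $t\To T$ (where it returns $\xi>0$), and in particular $Y_t>0$ for every $t$.

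It then remains to verify $q^*_\cdot:=f'(\cdot,Y_\cdot)\in\hcal$ and to assemble the representation. By \ref{A1}, $0\le q^*_s\le\lam\varphi'(Y_s)$, and by \eqref{eq:2.6} at $x_0=Y_s$ one has $f(s,Y_s)=f^*(s,q^*_s)+q^*_sY_s$ with $f^*(s,q^*_s)\in\R_+$; this identity shows that $Y$ solves exactly the $\hat Y^{q^*_\cdot}$-BSDE, so once $q^*_\cdot\in\hcal$ is known, uniqueness of that equation gives $Y_\cdot=\hat Y^{q^*_\cdot}_\cdot$. The condition $(f^*(\cdot,q^*_\cdot))\in\lcal^p$ is easy, since $f^*(s,q^*_s)=f(s,Y_s)-q^*_sY_s\le f(s,Y_s)\le\bar\alpha_s+\beta Y_s\in\lcal^p$. \emph{The main obstacle is the exponential moment ${\rm e}^{\int_0^T q^*_s{\rm d}s}\in L^{\bar p}(\R_+)$.} Here I would use $q^*_s\le\lam\varphi'(Y_s)$ with the monotonicity of $\varphi'$ and the lower bound above to dominate $\int_0^T\varphi'(Y_s){\rm d}s$ by a change of variables through $H_c$; the self-domination estimates \eqref{eq:2.11*}--\eqref{eq:2.11} of \cref{lem:2.2} convert this into a bound involving $\E^\Q[\,1/(\varphi(\xi)+c)\,|\,\F_\cdot]$, after which Doob's maximal inequality, Jensen's inequality and the hypothesis $1/(\varphi(\xi)+c)\in L^{\hat p\lam{\rm e}^{2\tilde\beta T}}(\R_+)$ close the estimate, with $\hat p>\bar p$ providing the margin lost to H\"older and Doob and the exponent $\lam{\rm e}^{2\tilde\beta T}$ matching the factor $\lam$ from \ref{A1} and the $\tilde\beta$-correction in the lower bound. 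Finally, for an arbitrary $q_\cdot\in\hcal$ the duality inequality $f(s,Y_s)\le f^*(s,q_s)+q_sY_s$ makes $(Y,Z)$ a subsolution of the $\hat Y^{q_\cdot}$-BSDE, whence $Y_t\le\hat Y^{q_\cdot}_t$ by comparison; combined with the equality at $q^*_\cdot$ this proves \eqref{eq:3.4}. Since the right-hand side of \eqref{eq:3.4} depends only on $(\xi,g)$ and not on the chosen solution, uniqueness in $\scal^p(\R_{++})\times\mcal^p(\R^{1\times d})$ follows at once.
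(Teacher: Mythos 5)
Your proposal is correct and follows essentially the same route as the paper's proof: convex duality and verification, comparison of $Y$ against each $\hat Y^{q_\cdot}$, an a priori lower bound obtained by applying It\^o's formula to $H_c(Y_\cdot)$ (the paper's $\Phi_{\bar c}$) after a Girsanov removal of the $z$-dependence, and then \cref{lem:2.2} together with Doob's, Jensen's and H\"older's inequalities to establish the exponential moment of $q^*_\cdot=f'(\cdot,Y_\cdot)$. The only cosmetic differences are that the paper absorbs the $\tilde f(s,y,0)$ term through the exponential transform ${\rm e}^{\int_0^\cdot a_s{\rm d}s}$ before applying $\Phi_{\bar c}$ rather than via a Gronwall step, and that it justifies the comparison $Y_t\leq \hat Y^{q_\cdot}_t$ with an explicit It\^o--Tanaka/Girsanov argument (needed because the linearized coefficient $q_t$ is unbounded), a point your sketch leaves implicit.
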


\begin{rmk}\label{rmk:3.3}
We make the following remarks.
\begin{itemize}
\item [(i)] Suppose the function $g$ defined in \eqref{eq:3.1} satisfies  assumptions \ref{A1}-\ref{A3}. Let $\bar g$ be defined as follows:
\begin{equation}\label{eq:3.7-1}
\bar g(\omega,t,y,z):=\left\{
\begin{array}{ll}
g(\omega,t,y,z), & (\omega,t,y,z)\in \Omega\times\T\times\R_+\times\R^{1\times d};\vspace{0.1cm}\\
g(\omega,t,0,z), & (\omega,t,y,z)\in \Omega\times\T\times(-\infty,0)\times\R^{1\times d}.
\end{array}
\right.
\end{equation}
It is not hard to verify that for each $(y,z)\in\R\times\R^{1\times d}$, $\bar g(\omega,t,y,z)$ is $(\F_t)$-progressively measurable taking values on $\R$, and $\as$,
\begin{itemize}
\item $\bar g(\omega,t,\cdot,\cdot)$ is continuous on $\R\times\R^{1\times d}$,\  and\ \ $0\leq \alpha_t(\omega)\leq \bar g(\omega,t,0,0)\leq 2\bar\alpha_t(\omega)$\vspace{0.1cm};
\item $\RE (y,z)\in \R\times\R^{1\times d},\ \ |\bar g(\omega,t,y,z)|\leq 2\bar\alpha_t(\omega)+ (\beta+\bar\beta+\tilde\beta)|y|+\gamma |z|$.\vspace{0.1cm}
\end{itemize}
From some existing results such as Theorem 1 of \citet{Fan2016SPL}, Theorem 5.1 of \citet{Fan2016SPA} and Theorem 2.3 (iv) of \citet{FanHuTang2023arXiv}, it follows that for each $\xi\in L^p(\R)$, the following BSDE \begin{equation}\label{eq:3.7-2}
  Y_t=\xi+\int_t^T \bar g(s,Y_s,Z_s){\rm d}s-\int_t^T Z_s {\rm d}B_s, \ \ t\in\T
\end{equation}
admits an adapted solution $(Y_t,Z_t)_{t\in\T}\in \scal^p(\R)\times \mcal^p(\R^{1\times d})$. And, by the classical comparison theorem under the Lipschitz condition we have that if $\xi>0$, then $Y_\cdot>0$. This means that for each $\xi\in L^p(\R_{++})$, BSDE \eqref{eq:3.2} admits an adapted solution $(Y_t,Z_t)_{t\in\T}\in \scal^p(\R_{++})\times \mcal^p(\R^{1\times d})$.

\item [(ii)] Suppose that the function $f$ disappears in the definition \eqref{eq:3.1} of the function $g$, and functions $\bar f$ and $\tilde f$ satisfy assumptions \ref{A2} and \ref{A3}. Let $\bar g$ be defined in \eqref{eq:3.7-1}. Then for each $(y,z)\in\R\times\R^{1\times d}$, $\bar g(\omega,t,y,z)$ is $(\F_t)$-progressively measurable taking values on $\R$, and $\as$,
\begin{itemize}
\item $\bar g(\omega,t,\cdot,\cdot)$ is continuous on $\R\times\R^{1\times d}$,\  and\ \ $0\leq \bar g(\omega,t,0,0)\leq \bar\alpha_t(\omega)$\vspace{0.1cm};
\item $\RE (y_1,y_2,z)\in \R\times\R\times\R^{1\times d},\ \ {\rm sgn}(y_1-y_2)(\bar g(\omega,t,y_1,z)-\bar g(\omega,t,y_2,z))\leq (\bar\beta+\tilde\beta)|y_1-y_2|$\vspace{0.1cm};
\item $\RE (y,z_1,z_2)\in \R\times\R^{1\times d}\times\R^{1\times d},\ \ |\bar g(\omega,t,y,z_1)-\bar g(\omega,t,y,z_2)|\leq \gamma |z_1-z_2|$\vspace{0.1cm};
\item $\RE y\in \R_+,\ \ |\bar g(\omega,t,y,0)|\leq \bar\alpha_t(\omega)+(\bar\beta+\tilde\beta)|y|.$
\end{itemize}
It then follows from Theorem 4.2 of \citet{BriandDelyonHu2003SPA} that for each $\xi\in L^p(\R)$, BSDE \eqref{eq:3.7-2} admits a unique adapted solution $(Y_t,Z_t)_{t\in\T}\in \scal^p(\R)\times \mcal^p(\R^{1\times d})$. And, by the classical comparison theorem under the Lipschitz condition we have that if $\xi>0$, then $Y_\cdot>0$. This means that for each $\xi\in L^p(\R_{++})$, BSDE \eqref{eq:3.2} admits a unique adapted solution $(Y_t,Z_t)_{t\in\T}\in \scal^p(\R_{++})\times \mcal^p(\R^{1\times d})$.

\item [(iii)] Suppose the function $g$ defined in \eqref{eq:3.1} satisfies the above assumptions \ref{A1}-\ref{A3}. Suppose further that $\xi\in L^p(\R_{++})$ is bigger than or equal to some constant $\bar c>0$. For each $c\in (0,\bar c]$, define
$$
\hat f(\omega,t,y):=\left\{
\begin{array}{ll}
f(\omega,t,y), & (\omega,t,y)\in \Omega\times\T\times[c,+\infty);\vspace{0.1cm}\\
\frac{f(\omega,t,c)}{c}y, & (\omega,t,y)\in \Omega\times\T\times [0,c).
\end{array}
\right.
$$
By assumption \ref{A1} we can easily verify that $\as$, for each $(y_1,y_2)\in \R_+\times\R_+$,
$$
|\hat f(\omega,t,y_1)-\hat f(\omega,t,y_2)|\leq \lambda\varphi'(c)|y_1-y_2|.
$$
Thus, by replacing $f$ with $\hat f$ in the definition \eqref{eq:3.1} of the function $g$ and using a similar argument as in (ii), we can conclude that BSDE \eqref{eq:3.2} admits at most an adapted solution $(Y_t,Z_t)_{t\in\T}\in \scal^p(\R_{+})\times \mcal^p(\R^{1\times d})$ such that $Y_\cdot \geq c$. On the other hand, note that the following BSDE
$$
Y_t=\bar c+\int_t^T (-\tilde\beta |Y_s|-\gamma |Z_s|){\rm d}s-\int_t^T Z_s {\rm d}B_s, \ \ t\in\T
$$
admits a unique adapted solution $(\bar c{\rm e}^{\tilde\beta(t-T)},0)_{t\in\T}$ belonging to $\scal^p(\R_{++})\times \mcal^p(\R^{1\times d})$. By the nonnegativity of $f$ and $\bar f$, the Lipschitz condition of $\tilde f$ and the classical comparison theorem under the Lipschitz condition it can be checked that for any adapted solution $(Y_t,Z_t)_{t\in\T}$ of BSDE \eqref{eq:3.2}, we have $Y_\cdot\geq \bar c{\rm e}^{-\tilde\beta T}$. Consequently, BSDE \eqref{eq:3.2} admits a unique adapted solution $(Y_t,Z_t)_{t\in\T}$ in the space $\scal^p(\R_{++})\times \mcal^p(\R^{1\times d})$.
\end{itemize}
\end{rmk}

From \cref{thm:3.2}, we immediately have  the assertion (iii) of \cref{rmk:3.3} as the following corollary.

\begin{cor}\label{cor:3.4}
Suppose the function $g$ defined in \eqref{eq:3.1} satisfies assumptions \ref{A1}-\ref{A3} and $\xi\in L^p(\R_{++})$. If $\xi\geq \bar c$ for some $\bar c>0$, then BSDE \eqref{eq:3.2} admits a unique adapted solution in $\scal^p(\R_{++})\times\mcal^p(\R^{1\times d})$.
\end{cor}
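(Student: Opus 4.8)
The plan is to use the strictly positive floor $\bar c$ on the terminal value to strip away the Peano-type degeneracy of $f$ at the origin, turning $g$ into a generator that is monotone in $y$ and Lipschitz in $z$, so that existence and uniqueness become classical. This is the route outlined in \cref{rmk:3.3}(iii); alternatively, since $\xi\geq \bar c$ gives $1/(\varphi(\xi)+c)\leq 1/\varphi(\bar c)<+\infty$, the integrability hypothesis of \cref{thm:3.2} holds automatically and the corollary is a special case of it. I would run the direct argument, which avoids the heavier machinery of \cref{thm:3.2}.

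First I would derive a deterministic positive lower bound shared by every adapted solution. Let $(Y_\cdot,Z_\cdot)\in\scal^p(\R_{++})\times\mcal^p(\R^{1\times d})$ solve \eqref{eq:3.2}. Since $f,\bar f\geq 0$ and, by \ref{A3} with $\tilde f(\cdot,0,0)=0$, $|\tilde f(s,y,z)|\leq \tilde\beta|y|+\gamma|z|$, the generator obeys $g(s,y,z)\geq -\tilde\beta|y|-\gamma|z|$. The pair $(\bar c\,{\rm e}^{\tilde\beta(t-T)},0)_{t\in\T}$ is the adapted solution of the BSDE with terminal value $\bar c$ and Lipschitz generator $-\tilde\beta|y|-\gamma|z|$, so the comparison theorem in its Lipschitz form yields $Y_t\geq \bar c\,{\rm e}^{\tilde\beta(t-T)}\geq \bar c\,{\rm e}^{-\tilde\beta T}=:c_0>0$ for all $t\in\T$.

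Next I would linearize $f$ below the level $c_0$. Taking $c=c_0$ in \cref{rmk:3.3}(iii), set $\hat f(\cdot,y):=f(\cdot,y)$ for $y\geq c_0$ and $\hat f(\cdot,y):=y\,f(\cdot,c_0)/c_0$ for $y\in[0,c_0)$; by the concavity and the derivative bound $f'\leq\lambda\varphi'$ in \ref{A1}, $\hat f$ is nonnegative and Lipschitz in $y$. Hence $\hat g:=\hat f+\bar f+\tilde f$ is of linear growth, Lipschitz in $z$ and monotone in $y$, and exactly as in \cref{rmk:3.3}(ii), Theorem 4.2 of \citet{BriandDelyonHu2003SPA} furnishes a unique adapted solution $(\hat Y_\cdot,\hat Z_\cdot)\in\scal^p(\R)\times\mcal^p(\R^{1\times d})$ of the BSDE with generator $\hat g$ and terminal value $\xi$. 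Applying the first step to this modified BSDE (using $\hat f,\bar f\geq 0$) gives $\hat Y_\cdot\geq c_0$, and since $\hat f=f$ on $[c_0,+\infty)$ the pair $(\hat Y_\cdot,\hat Z_\cdot)$ in fact solves \eqref{eq:3.2}; this settles existence in $\scal^p(\R_{++})\times\mcal^p(\R^{1\times d})$.

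Uniqueness then follows by reading the lower bound in the opposite direction: any adapted solution $(Y_\cdot,Z_\cdot)$ of \eqref{eq:3.2} obeys $Y_\cdot\geq c_0$ by the first step, so $f(s,Y_s)=\hat f(s,Y_s)$ and $(Y_\cdot,Z_\cdot)$ solves the modified BSDE, forcing $(Y_\cdot,Z_\cdot)=(\hat Y_\cdot,\hat Z_\cdot)$. I expect the crux to be the first step: identifying the correct comparison BSDE and checking that the sign conditions $f,\bar f\geq 0$, together with the Lipschitz/monotone structure of $\tilde f$ and $\bar f$, genuinely place the problem within reach of the Lipschitz comparison and the monotone-generator uniqueness theorems, since it is this positive floor $c_0$ alone that neutralizes the Peano-type singularity of $f$ at $0$.
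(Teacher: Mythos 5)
Your proposal is correct, and both routes you sketch are in fact the paper's own. The official proof of \cref{cor:3.4} is precisely your parenthetical observation: $\xi\geq\bar c$ gives $\varphi(\xi)\geq\varphi(\bar c)>0$, hence $1/(\varphi(\xi)+c)\leq 1/\varphi(\bar c)$ is bounded and the integrability hypothesis of \cref{thm:3.2} holds trivially, so the corollary is an immediate specialization of that theorem. The direct argument you then choose to run reproduces \cref{rmk:3.3}(iii) almost verbatim: the same comparison with the Lipschitz BSDE solved by $(\bar c\,{\rm e}^{\tilde\beta(t-T)},0)$ yielding the floor $c_0=\bar c\,{\rm e}^{-\tilde\beta T}$, and the same linearization of $f$ below $c_0$. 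What the direct route buys is independence from the machinery of \cref{thm:3.2} (duality, Girsanov, Doob); what it costs is that it only works under a deterministic positive floor on $\xi$, whereas \cref{thm:3.2} also covers $\essinf\xi=0$.

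One technical caveat, which you inherit from the paper's own \cref{rmk:3.3}(iii): the linear extension $\hat f(\cdot,y)=y\,f(\cdot,c_0)/c_0$ on $[0,c_0)$ has slope $f(\cdot,c_0)/c_0$, and assumption \ref{A1} only bounds $f(\cdot,c_0)$ above by $\bar\alpha_\cdot+\beta c_0$ with $\bar\alpha_\cdot\in\lcal^p(\R_+)$ possibly unbounded; so this $\hat f$ need not be uniformly Lipschitz in $y$, nor satisfy the monotonicity condition with a deterministic constant, and Theorem 4.2 of \citet{BriandDelyonHu2003SPA} does not apply off the shelf. The repair is easy: either replace the linear extension by the constant one, $\hat f(\cdot,y):=f(\cdot,\max(y,c_0))$, which is genuinely $\lambda\varphi'(c_0)$-Lipschitz because $0\leq f'\leq\lambda\varphi'\leq\lambda\varphi'(c_0)$ on $[c_0,+\infty)$ ($\varphi'$ being nonincreasing by concavity), or skip the modification altogether and run the uniqueness estimate directly on the difference of two solutions, both of which live above $c_0$ by your first step, so that only the Lipschitz bound of $f$ on $[c_0,+\infty)$ is ever used. (For existence no modification is needed at all: it is already supplied for the original BSDE by \cref{rmk:3.3}(i).)
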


At the end of this subsection, we present the following example.

\begin{ex}\label{ex:3.5}
For simplicity, we only consider the case of $\bar f(\cdot)=\tilde f(\cdot)\equiv 0$ appearing in \eqref{eq:3.1}. For each $(\omega,t,y)\in \Omega\times\T\times\R_+$, define
$$
g_1(\omega,t,y):=\sqrt{y+|B_t(\omega)|}, \ \ g_2(\omega,t,y):=\sqrt{y}+|B_t(\omega)|,\ \
g_3(\omega,t,y):=\sqrt{y+T-t},\ \
$$
and
$$
g_4(\omega,t,y):=\sqrt{y}+T-t,\ \ g_5(\omega,t,y):=\sqrt{y+t},\ \ g_6(\omega,t,y):=\sqrt{y}+t.
$$
It is easy to verify that all these functions satisfy the assumption \ref{A1} with $\varphi(x):=\sqrt{x}\in \Sbb$ and $\lambda=1$. It then follows from \cref{thm:3.2} that for each $i=1,\cdots, 6$ and $\xi\in L^p(\R_{++})$, if $1/\xi\in L^{2\hat p}$ for some $\hat p>q$, then BSDE \eqref{eq:3.2} with $g:=g_i$ admits a unique adapted solution $(Y_\cdot^i,Z_\cdot^i)\in \scal^p(\R_{++})\times\mcal^p(\R^{1\times d})$. For example,
for any $(\F_t)$-progressively measurable bounded processes $(a_t)_{t\in\T}\in \R$ and $(b_t)_{t\in\T}\in \R^{1\times d}$, the terminal value $\xi:=e^{\int_0^T a_t {\rm d}t|B_T| +\int_0^T b_t{\rm d}B_t}$ satisfies the above-mentioned integrability condition.

On the other hand, if we first consider the BSDE on the time interval $[T/2,T]$, then from \cref{thm:3.2} with $\varphi(x):=\sqrt{x}\in \Sbb$ and $\alpha_\cdot:\equiv \sqrt{T/2}>0$ it follows that BSDEs \eqref{eq:3.2} with $g:=g_5, g_6$ admit unique adapted solutions $(Y_t^5,Z_t^5)_{t\in [T/2,T]}$ and $(Y_t^6,Z_t^6)_{t\in [T/2,T]}$, respectively,  on the time interval $[T/2,T]$ in the space $\scal^p(\R_{++})\times\mcal^p(\R^{1\times d})$. Furthermore, by taking the conditional mathematical expectation it is straightforward to verify that for each $t\in [T/2,T]$,
$$
Y^5_t\geq \sqrt{\frac{T}{2}}(T-t)\ \ \ {\rm and}\ \ \ Y^6_t\geq \frac{T}{2}(T-t)
$$
and then
$$
Y^5_{T/2}\geq \left(\frac{T}{2}\right)^{\frac{3}{2}}\ \ \ {\rm and}\ \ \
Y^6_{T/2}\geq \left(\frac{T}{2}\right)^2.
$$
Thus, \cref{cor:3.4} yields that BSDEs \eqref{eq:3.2} with generators $g:=g_5, g_6$ admit also unique adapted solutions $(Y_t^5,Z_t^5)_{t\in [0,T/2]}$ and $(Y_t^6,Z_t^6)_{t\in [0,T/2]}$ , respectively, on the time interval $[0,T/2]$ in the space $\scal^p(\R_{++})\times\mcal^p(\R^{1\times d})$.

As a result, we conclude that for each $\xi\in L^p(\R_{++})$, without assuming any integrability  on $1/\xi$, BSDEs \eqref{eq:3.2} with generators $g:=g_5, g_6$ admit unique adapted solutions $(Y_\cdot^5,Z_\cdot^5)$ and $(Y_\cdot^6,Z_\cdot^6)$ , respectively,  in the space  $\scal^p(\R_{++})\times\mcal^p(\R^{1\times d})$.
\end{ex}

\subsection{Proof of the main result}

Now, we are at a position to prove \cref{thm:3.2}.

\begin{proof}[Proof of \cref{thm:3.2}]
The existence of an adapted solution $(Y_t,Z_t)_{t\in\T}\in \scal^p(\R_{++})\times \mcal^p(\R^{1\times d})$ of BSDE \eqref{eq:3.2} has been illustrated in \cref{rmk:3.3} (i).

Next, we further verify existence and uniqueness of the adapted solution $(Y^{q_\cdot}_t,Z^{q_\cdot}_t)_{t\in\T}\in \scal^{\tilde p}(\R_{++})\times \mcal^{\tilde p}(\R^{1\times d})$ of BSDE \eqref{eq:3.6} for any $q_\cdot\in\hcal$. In fact, let $q_\cdot\in\hcal$ and define
$$
\bar g^{q_\cdot}(\omega,t,y,z):=\left\{
\begin{array}{ll}
g^{q_\cdot}(\omega,t,y,z), & (\omega,t,y,z)\in \Omega\times\T\times\R_+\times\R^{1\times d};\vspace{0.1cm}\\
g^{q_\cdot}(\omega,t,0,z), & (\omega,t,y,z)\in \Omega\times\T\times(-\infty,0)\times\R^{1\times d}.
\end{array}
\right.
$$
In light of assumptions \ref{A2} and \ref{A3} along with the definition \eqref{eq:3.7} of $g^{q_\cdot}$, we can verify that for each $(y,z)\in\R\times\R^{1\times d}$, $\bar g^{q_\cdot}(\omega,t,y,z)$ is $(\F_t)$-progressively measurable taking values on $\R$, and $\as$,
\begin{itemize}
\item $\bar g^{q_\cdot}(\omega,t,\cdot,\cdot)$ is continuous on $\R\times\R^{1\times d}$,\  and\ \ $\bar g^{q_\cdot}(\omega,t,0,0)\geq 0$\vspace{0.1cm};
\item $\RE (y_1,y_2,z)\in \R\times\R\times\R^{1\times d},\ \ {\rm sgn}(y_1-y_2)(\bar g^{q_\cdot}(\omega,t,y_1,z)-\bar g^{q_\cdot}(\omega,t,y_2,z))\leq (\bar\beta+\tilde\beta)|y_1-y_2|$\vspace{0.1cm};
\item $\RE (y,z_1,z_2)\in \R\times\R^{1\times d}\times\R^{1\times d},\ \ |\bar g^{q_\cdot}(\omega,t,y,z_1)-\bar g^{q_\cdot}(\omega,t,y,z_2)|\leq \gamma |z_1-z_2|$\vspace{0.1cm};
\item $\RE y\in \R_+,\ \ |\bar g^{q_\cdot}(\omega,t,y,0)|\leq {\rm e}^{\int_0^t q_s(\omega){\rm d}s} \left[\bar\alpha_t(\omega)+f^*(\omega,t,q_t(\omega))\right]+(\bar\beta+\tilde\beta)|y|$.\vspace{0.1cm}
\end{itemize}
Moreover, by the definition \eqref{eq:3.5} of $\hcal$ we know that
${\rm e}^{\int_0^T q_s{\rm d}s}\in L^{\bar p}(\R_+)$ and
$f^*(\cdot,q_\cdot)\in \lcal^p(\R_+)$. It then follows from H\"{o}lder's inequality that
$$
\Dis \E\left[|\xi^{q_\cdot}|^{\tilde p}\right]=\E\left[{\rm e}^{\tilde p \int_0^T q_s{\rm d}s}|\xi|^{\tilde p}\right]\leq \left(\E\left[ {\rm e}^{\bar p \int_0^T q_s{\rm d}s}\right]\right)^{\frac{\tilde p}{\bar p}}\left(\E\left[|\xi|^p\right]\right)^{\frac{\tilde p}{p}}<+\infty
$$
and
$$
\begin{array}{lll}
\Dis \E\left[\left(\int_0^T |\bar g^{q_\cdot}(t,0,0)| {\rm d}t\right)^{\tilde p}\right]&\leq & \Dis \E\left[{\rm e}^{\tilde p \int_0^T q_s{\rm d}s}\left(\int_0^T \left(\bar\alpha_t+f^*(t,q_t)\right) {\rm d}t\right)^{\tilde p}\right]\vspace{0.1cm}\\
&\leq & \Dis \left(\E\left[ {\rm e}^{\bar p \int_0^T q_s{\rm d}s}\right]\right)^{\frac{\tilde p}{\bar p}}\left(\E\left[\left(\int_0^T \left(\bar\alpha_t+f^*(t,q_t)\right) {\rm d}t\right)^p\right]\right)^{\frac{\tilde p}{p}}\vspace{0.1cm}\\
&<& +\infty.
\end{array}
$$
Thus, an identical argument to (ii) of \cref{rmk:3.3} yields that for each $q_\cdot\in \hcal$, BSDE \eqref{eq:3.6} admits a unique adapted solution $(Y^{q_\cdot}_t,Z^{q_\cdot}_t)_{t\in\T}\in \scal^{\tilde p}(\R_{++})\times \mcal^{\tilde p}(\R^{1\times d})$.

In the sequel, for each $q_\cdot\in \hcal$, define $\hat Y^{q_\cdot}_\cdot:={\rm e}^{-\int_0^\cdot q_s{\rm d}s}Y^{q_\cdot}_\cdot$ and $\hat Z^{q_\cdot}_\cdot:={\rm e}^{-\int_0^\cdot q_s{\rm d}s}Z^{q_\cdot}_\cdot$,
and let us prove that for each $t\in\T$,
\begin{equation}\label{eq:3.8}
Y_t\leq \hat Y^{q_\cdot}_t.
\end{equation}
First, in light of \eqref{eq:3.6} and \eqref{eq:3.7}, by It\^{o}'s formula we obtain
\begin{equation}\label{eq:3.9}
\hat Y^{q_\cdot}_t=\xi+\int_t^T \hat g^{q_\cdot}(s,\hat Y^{q_\cdot}_s,\hat Z^{q_\cdot}_s){\rm d}s-\int_t^T \hat Z^{q_\cdot}_s {\rm d}B_s, \ \ t\in\T,
\end{equation}
where for each $(\omega,t,y,z)\in \Omega\times\T\times\R_+\times\R^{1\times d}$,
$$
\hat g^{q_\cdot}(\omega,t,y,z):=q_t(\omega) y+f^*(\omega,t,q_t(\omega))+\bar f(\omega,t,y)+\tilde f(\omega,t,y,z).
$$
On the other hand, in light of \eqref{eq:3.3}, by a similar analysis as in \cref{lem:2.1} we obtain
$$
f(\omega,t,y)=\inf\limits_{q_\cdot\in\R_+;f^*(\omega,t,q)\in\R_+}
\{f^*(\omega,t,q)+qy\},\ \ (\omega,t,y)\in \Omega\times\T\times\R_+,
$$
and then
$$
f(t,Y_t)\leq q_tY_t+f^*(t,q_t),\ \ t\in\T.
$$
Furthermore, in light of the definitions of $g$ and $\hat g^{q_\cdot}$, by assumptions \ref{A2} and \ref{A3} along with the last inequality we can deduce that
$$
\begin{array}{l}
\Dis {\bf 1}_{Y_t\geq \hat Y^{q_\cdot}_t}\left(g(t,Y_t,Z_t)
-g^{q_\cdot}(t,\hat Y^{q_\cdot}_t,\hat Z^{q_\cdot}_t)\right)\vspace{0.1cm}\\
\ \ \Dis ={\bf 1}_{Y_t\geq \hat Y^{q_\cdot}_t}\left[f(t,Y_t)-q_tY_t-f^*(t,q_t)+q_t(Y_t-\hat Y^{q_\cdot}_t)\right]\vspace{0.1cm}\\
\ \ \ \ \ \Dis +{\bf 1}_{Y_t\geq \hat Y^{q_\cdot}_t}\left(\bar f(t,Y_t)-\bar f(t,\hat Y^{q_\cdot}_t)\right)+{\bf 1}_{Y_t\geq \hat Y^{q_\cdot}_t}\left(\tilde f(t,Y_t,Z_t)-\tilde f(t,\hat Y^{q_\cdot}_t,\hat Z^{q_\cdot}_t)\right)\vspace{0.1cm}\\
\ \ \Dis \leq (q_t+\bar\beta+\tilde\beta)(Y_t-\hat Y^{q_\cdot}_t)^+ +\gamma {\bf 1}_{Y_t\geq \hat Y^{q_\cdot}_t} |Z_t-\hat Z^{q_\cdot}_t|,\ \ t\in\T,
\end{array}
$$
and, by letting $\hat q_t:=(\bar\beta+\tilde\beta)t+\int_0^t q_s{\rm d}s,\ t\in\T$ and using It\^{o}-Tanaka's formula for ${\rm e}^{\hat q_t}(Y_t-\hat Y^{q_\cdot}_t)^+$,
\begin{equation}\label{eq:3.10}
\Dis {\rm e}^{\hat q_t}(Y_t-\hat Y^{q_\cdot}_t)^+\leq  \Dis \int_t^T {\rm e}^{\hat q_s}{\bf 1}_{Y_s\geq \hat Y^{q_\cdot}_s} \gamma |Z_s-\hat Z^{q_\cdot}_s|{\rm d}s-\int_t^T {\rm e}^{\hat q_s}{\bf 1}_{Y_s\geq \hat Y^{q_\cdot}_s} (Z_s-\hat Z^{q_\cdot}_s){\rm d}B_s,\ \ t\in\T.
\end{equation}
Now, we define a probability measure $\Q^{q_\cdot}$ which is equivalent to $\p$ by
$$
\frac{{\rm d}\Q^{q_\cdot}}{{\rm d}\p}:=\exp\left(
\int_0^T \frac{\gamma (Z_s-\hat Z^{q_\cdot}_s)}{|Z_s-\hat Z^{q_\cdot}_s|}{\bf 1}_{\{|Z_s-\hat Z^{q_\cdot}_s|\neq 0\}}{\rm d}B_s-\frac{\gamma^2}{2}\int_0^T {\bf 1}_{\{|Z_s-\hat Z^{q_\cdot}_s|\neq 0\}} {\rm d}s\right).
$$
Then, ${\rm d}\Q^{q_\cdot}/{\rm d}\p$ admits moments of any order. According to Girsanov's theorem, the process
$$
B_t^{q_\cdot}:=B_t-\int_0^t \frac{\gamma (Z_s-\hat Z^{q_\cdot}_s)^\top}{|Z_s-\hat Z^{q_\cdot}_s|}{\bf 1}_{\{|Z_s-\hat Z^{q_\cdot}_s|\neq 0\}}{\rm d}s,\ \ t\in \T
$$
is an $(\F_t)$-adapted standard Brownian motion, and \eqref{eq:3.10} can be written as
\begin{equation}\label{eq:3.11}
{\rm e}^{\hat q_t}(Y_t-\hat Y^{q_\cdot}_t)^+\leq \Dis -\int_t^T {\rm e}^{\hat q_s}{\bf 1}_{Y_s\geq \hat Y^{q_\cdot}_s} (Z_s-\hat Z^{q_\cdot}_s){\rm d}B_s^{q_\cdot},\ \ t\in\T.
\end{equation}
Finally, by the definition of $\hat q_\cdot$ and the integrability of $Z^{q_\cdot}_\cdot$ we know that ${\rm e}^{\hat q_\cdot}\hat Z^{q_\cdot}_\cdot={\rm e}^{(\bar\beta+\tilde\beta)\cdot}Z^{q_\cdot}_\cdot\in \mcal^{\tilde p}(\R^{1\times d})$, and, by H\"{o}lder's inequality we deduce that
$$
\begin{array}{lll}
\Dis \E\left[\left(\int_0^T |{\rm e}^{\hat q_t} Z_t|^2 {\rm d}t\right)^{\frac{\tilde p}{2}}\right]&\leq & \Dis {\rm e}^{\tilde p(\bar\beta+\tilde\beta)T}\E\left[{\rm e}^{\tilde p \int_0^T q_s{\rm d}s}\left(\int_0^T |Z_t|^2 {\rm d}t\right)^{\frac{\tilde p}{2}}\right]\vspace{0.1cm}\\
&\leq & \Dis {\rm e}^{\tilde p(\bar\beta+\tilde\beta)T}\left(\E\left[ {\rm e}^{\bar p \int_0^T q_s{\rm d}s}\right]\right)^{\frac{\tilde p}{\bar p}}\left(\E\left[\left(\int_0^T |Z_t|^2 {\rm d}t\right)^{\frac{p}{2}}\right]\right)^{\frac{\tilde p}{p}}\vspace{0.1cm}\\
&<& +\infty
\end{array}
$$
and then
$$
\begin{array}{l}
\Dis \E_{\Q^{q_\cdot}}\left[\left(\int_0^T |{\rm e}^{\hat q_t} (Z_t-\hat Z^{q_\cdot}_t)|^2 {\rm d}t\right)^{\frac{1}{2}}\right]\vspace{0.1cm}\\
\ \ \Dis \leq \left(\E\left[\left(\int_0^T |{\rm e}^{\hat q_t} (Z_t-\hat Z^{q_\cdot}_t)|^2 {\rm d}t\right)^{\frac{\tilde p}{2}}\right]\right)^{\frac{1}{\tilde p}}\left( \E\left[\left(\frac{{\rm d}\Q^{q_\cdot}}{{\rm d}\p}\right)^{\frac{\tilde p}{\tilde p-1}}\right]\right)^{\frac{\tilde p-1}{\tilde p}}<+\infty.
\end{array}
$$
Thus, \eqref{eq:3.8} follows immediately by taking the conditional mathematical expectation with respect to $\F_t$ under probability $\Q^{q_\cdot}$ on the both sides of \eqref{eq:3.11}.

To complete the proof of \cref{thm:3.2}, it remains to verify that there exists a $q^*_\cdot\in \hcal$ such that
\begin{equation}\label{eq:3.12}
Y_\cdot=\hat Y^{q^*_\cdot}_\cdot={\rm e}^{-\int_0^\cdot q^*_s{\rm d}s}Y^{q^*_\cdot}_\cdot.
\end{equation}
The following proof will be divided into three steps.

{\bf Step 1.} In this step we will prove that $Y_\cdot$ admits a desired lower bound, see \eqref{eq:3.17} below. For each $t\in\T$, we define
$$
a_t:=\frac{\tilde f(t,Y_t,Z_t)-\tilde f(t,0,Z_t)}{Y_t}{\bf 1}_{\{Y_t>0\}}\in\R
$$
and
$$
b_t:=\frac{\big(\tilde f(t,0,Z_t)-\tilde f(t,0,0)\big)Z_t^\top}{|Z_t|^2}{\bf 1}_{\{|Z_t|\neq 0\}}\in \R^d.
$$
It follows from assumption \ref{A3} that $|a_\cdot|\leq \tilde\beta$, $|b_\cdot|\leq \gamma$ and
\begin{equation}\label{eq:3.13}
\tilde f(t,Y_t,Z_t)=a_tY_t+Z_t b_t.
\end{equation}
Furthermore, define a probability measure $\bar\Q$ which is equivalent to $\p$ by
$$
\frac{{\rm d}\bar\Q}{{\rm d}\p}:=\exp\left(
\int_0^T b_s^\top B_s-\frac{1}{2}\int_0^T |b_s|^2 {\rm d}s\right).
$$
Then, both $\frac{{\rm d}\bar\Q}{{\rm d}\p}$ and $\frac{{\rm d}\p}{{\rm d}\bar\Q}$ admit moments of any order. According to Girsanov's theorem, the process
$$
\bar B_t:=B_t-\int_0^t b_s{\rm d}s,\ \ t\in \T
$$
is an $(\F_t)$-adapted standard Brownian motion, and in light of \eqref{eq:3.13}, BSDE \eqref{eq:3.2} can be written as
$$
Y_t=\xi+\int_t^T \left(f(s,Y_s)+\bar f(s,Y_s)+a_sY_s\right){\rm d}s-\int_t^T Z_s {\rm d}\bar B_s, \ \ t\in\T.
$$
Then, letting $\bar\xi:={\rm e}^{\int_0^T a_s{\rm d}s}\xi$, $\bar Y_\cdot:={\rm e}^{\int_0^\cdot a_s{\rm d}s}Y_\cdot$ and $\bar Z_\cdot:={\rm e}^{\int_0^\cdot a_s{\rm d}s}Z_\cdot$, by It\^{o}'s formula we obtain
\begin{equation}\label{eq:3.14}
\bar Y_t=\bar\xi+\int_t^T \bar g(s,\bar Y_s) {\rm d}s-\int_t^T \bar Z_s {\rm d}\bar B_s, \ \ t\in\T
\end{equation}
with
$$
\bar g(t,y):={\rm e}^{\int_0^t a_s{\rm d}s}\left(f(t,{\rm e}^{-\int_0^t a_s{\rm d}s}y)+\bar f(t,{\rm e}^{-\int_0^t a_s{\rm d}s}y)\right),\ \ (\omega,t,y)\in\Omega\times\T\times\R_{+}.
$$
From assumption \ref{A1} along with nonnegativity of $\bar f$ and boundedness of $a_\cdot$, it follows that $\as$, for each $t\in\T$ and $y\in\R_+$,
\begin{equation}\label{eq:3.15}
\bar g(t,y)\geq {\rm e}^{\int_0^t a_s{\rm d}s}f(t,{\rm e}^{-\int_0^t a_s{\rm d}s}y)\geq  {\rm e}^{\int_0^t a_s{\rm d}s}(\alpha_t+\varphi(y))\geq \bar\alpha_t+\bar\varphi(y)\geq \bar c +\bar\varphi(y)
\end{equation}
with
$$
\bar\alpha_\cdot:={\rm e}^{-\tilde\beta t}\alpha_t\geq \bar c:={\rm e}^{-\tilde\beta T}c\ \ \ {\rm and}\ \ \ \bar\varphi(y):={\rm e}^{-\tilde\beta T}\varphi(y)\in\Sbb.
$$
Now, let $\bar\varphi_{\bar c}(\cdot):=\bar c+\bar\varphi(\cdot)$. For each $n\geq 1$ and $t\in\T$, define the following stopping time
$$
\tau_n^t:=\inf\left\{u\geq t\left.| \int_0^u \left|\frac{\bar Z_s}{\bar\varphi_{\bar c}(\bar Y_s)}\right|^2\right.{\rm d}s\geq n\right\}\wedge T
$$
and the following function
$$
\Phi_{\bar c}(u):=\int_0^u \frac{1}{\bar\varphi_{\bar c}(y)}{\rm d}y=\int_0^u \frac{1}{\bar c+\bar\varphi(y)}{\rm d}y,\ \ u\in\R_+.
$$
In light of \eqref{eq:3.14} and \eqref{eq:3.15}, using It\^{o}'s formula for $\Phi_{\bar c}(\bar Y_t)$ yields that for each $n\geq 1$ and $t\in\T$,
\begin{equation}\label{eq:3.16}
\begin{array}{lll}
\Dis \Phi_{\bar c}(\bar Y_t)&=& \Dis \Phi_{\bar c}(\bar Y_{\tau_n^t})+\int_t^{\tau_n^t} \left(
\frac{\bar g(s,\bar Y_s)}{\bar\varphi_{\bar c}(\bar Y_s)}+\frac{\bar\varphi'(\bar Y_s)}{2(\bar\varphi_{\bar c}(\bar Y_s))^2} \right){\rm d}s-\int_t^{\tau_n^t} \frac{\bar Z_s}{\bar\varphi_{\bar c}(\bar Y_s)} {\rm d}\bar B_s\\
&\geq & \Dis \E_{\bar\Q}\left[\left.\Phi_{\bar c}(\bar Y_{\tau_n^t})+(\tau_n^t-t)\right|\F_t\right].
\end{array}
\end{equation}
Since $\Phi_{\bar c}(0)=0$ and for each $u>0$, $\Phi'_{\bar c}(u)>0$ and $\Phi''_{\bar c}(u)\leq 0$, we have $\Phi_{\bar c}(\cdot)\in \Sbf$ and, in light of \eqref{eq:2.2},
$$
\Phi_{\bar c}(u)\leq \Phi_{\bar c}(1)+\Phi_{\bar c}(1)u,\ \ u\in\R_+.
$$
On the other hand, by H\"{o}lder's inequality we deduce that
$$
\E\left[\sup\limits_{t\in\T}|\bar Y_t|^{\tilde p}\right]\leq {\rm e}^{\tilde p\tilde\beta T}\E\left[\sup\limits_{t\in\T}|Y_t|^{\tilde p}\right]\leq {\rm e}^{\tilde p\tilde\beta T} \left(\E\left[\sup\limits_{t\in\T}|Y_t|^p \right]\right)^{\frac{\tilde p}{p}}<+\infty
$$
and then
$$
\E_{\bar\Q}\left[\sup\limits_{t\in\T}|\bar Y_t|\right]\leq
\left(\E\left[\sup\limits_{t\in\T}|\bar Y_t|^{\tilde p}\right]\right)^{\frac{1}{\tilde p}}\left( \E\left[\left(\frac{{\rm d}\bar\Q}{{\rm d}\p}\right)^{\frac{\tilde p}{\tilde p-1}} \right]\right)^{\frac{\tilde p-1}{\tilde p}}<+\infty.
$$
In light of the last three inequalities, we can send $n\To\infty$ in \eqref{eq:3.16} to obtain that for each $t\in\T$,
$$
\Phi_{\bar c}(\bar Y_t)\geq \E_{\bar\Q}\left[\Phi_{\bar c}(\bar\xi)|\F_t\right]+T-t
$$
and then
\begin{equation}\label{eq:3.17}
Y_t\geq {\rm e}^{-\int_0^t a_s{\rm d}s} \Phi_{\bar c}^{-1}\left(\E_{\bar\Q}\left[\Phi_{\bar c}(\bar\xi)|\F_t\right]+T-t\right)
\geq {\rm e}^{-\tilde\beta T}\Phi_{\bar c}^{-1}\left(\eta+T-t\right)
\end{equation}
with
\begin{equation}\label{eq:3.18}
\eta:=\min\limits_{t\in\T}\E_{\bar\Q}\left[\Phi_{\bar c}(\bar\xi)|\F_t\right]
\in\R_{++}.
\end{equation}

{\bf Step 2.} In this step, we pick $q^*_\cdot:=f'(\cdot,Y_\cdot)\geq 0$ and prove that $q^*_\cdot\in\hcal$. First, it follows from assumption \ref{A1} and the definition of $\bar\varphi_{\bar c}(\cdot)$ that for each $t\in\T$,
$$
q^*_t=f'(t,Y_t)\leq \lambda \varphi'(Y_t)=\lambda {\rm e}^{\tilde\beta T} \bar\varphi'_{\bar c}(Y_t).
$$
Then, in light of \eqref{eq:3.17} and the definition of $\Phi_{\bar c}(\cdot)$ along with the nonincreasing property of $\bar\varphi'_{\bar c}(\cdot)$ and the nondecreasing property of $\bar\varphi_{\bar c}(\cdot)$, we can deduce that
$$
\begin{array}{lll}
\Dis \int_0^T q^*_t {\rm d}t&\leq &\Dis  \lambda {\rm e}^{\tilde\beta T} \int_0^T \bar\varphi'_{\bar c}(Y_t) {\rm d}t\leq \lambda {\rm e}^{\tilde\beta T} \int_0^T \bar\varphi'_{\bar c}\left({\rm e}^{-\tilde\beta T}\Phi_{\bar c}^{-1}\left(\eta+T-t\right)\right) {\rm d}t\vspace{0.1cm}\\
&=& \Dis \lambda {\rm e}^{\tilde\beta T}\int_{\Phi_{\bar c}^{-1}(\eta)}^{\Phi_{\bar c}^{-1}(\eta+T)}\bar\varphi'_{\bar c}\left({\rm e}^{-\tilde\beta T}u\right)\frac{1}{\bar\varphi_{\bar c}(u)}{\rm d}u\vspace{0.1cm}\\
&\leq & \Dis \lambda {\rm e}^{2\tilde\beta T}\int_{\Phi_{\bar c}^{-1}(\eta)}^{\Phi_{\bar c}^{-1}(\eta+T)}\bar\varphi'_{\bar c}\left({\rm e}^{-\tilde\beta T}u\right)\frac{1}{\bar\varphi_{\bar c}\left({\rm e}^{-\tilde\beta T}u\right)}{\rm d}\left({\rm e}^{-\tilde\beta T}u\right)\\
&=& \Dis \lambda {\rm e}^{2\tilde\beta T}\left[\ln \bar\varphi_{\bar c}\left({\rm e}^{-\tilde\beta T}\Phi_{\bar c}^{-1}(\eta+T)\right)-\ln \bar\varphi_{\bar c}\left({\rm e}^{-\tilde\beta T}\Phi_{\bar c}^{-1}(\eta)\right)\right].
\end{array}
$$
By picking $\check p\in (\bar p,\hat p)$ and using H\"{o}lder's inequality, we have
\begin{equation}\label{eq:3.19}
\begin{array}{lll}
\Dis \E\left[{\rm e}^{\bar p\int_0^T q^*_t {\rm d}t}\right]&=&\Dis \Dis\E_{\bar\Q}\left[{\rm e}^{\bar p\int_0^T q^*_t {\rm d}t}\frac{{\rm d}\p}{{\rm d}\bar\Q}\right]\leq \E_{\bar\Q}\left[\left(\frac{\bar\varphi_{\bar c}\left({\rm e}^{-\tilde\beta T}\Phi_{\bar c}^{-1}(\eta+T)\right)}{\bar\varphi_{\bar c}\left({\rm e}^{-\tilde\beta T}\Phi_{\bar c}^{-1}(\eta)\right)}\right)^{\bar p\lambda {\rm e}^{2\tilde\beta T}}\frac{{\rm d}\p}{{\rm d}\bar\Q}\right]\vspace{0.1cm}\\
&\leq & \Dis \left\{\E_{\bar\Q}\left[\left(\frac{\bar\varphi_{\bar c}\left({\rm e}^{-\tilde\beta T}\Phi_{\bar c}^{-1}(\eta+T)\right)}{\bar\varphi_{\bar c}\left({\rm e}^{-\tilde\beta T}\Phi_{\bar c}^{-1}(\eta)\right)}\right)^{\check p\lambda {\rm e}^{2\tilde\beta T}}\right]\right\}^{\frac{\bar p}{\check p}}
\left\{\E_{\bar\Q}\left[\left(\frac{{\rm d}\p}{{\rm d}\bar\Q}\right)^{\frac{\check p}{\check p-\bar p}}\right]\right\}^{\frac{\check p-\bar p}{\check p}}.
\end{array}
\end{equation}
By \cref{lem:2.2}, we have for $\eta\geq \Phi_{\bar c}(1)$,
\begin{equation}\label{eq:3.20}
\bar\varphi_{\bar c}\left({\rm e}^{-\tilde\beta T}\Phi_{\bar c}^{-1}(\eta+T)\right)\leq 2{\rm e}^{T(\varphi(1)+\bar c)}\bar\varphi_{\bar c}\left({\rm e}^{-\tilde\beta T}\Phi_{\bar c}^{-1}(\eta)\right).
\end{equation}
Since
$$
\bar\varphi_{\bar c}\left({\rm e}^{-\tilde\beta T}\Phi_{\bar c}^{-1}(\eta)\right)\geq {\rm e}^{-\tilde\beta T}\bar\varphi_{\bar c}\left(\Phi_{\bar c}^{-1}(\eta)\right),
$$
and both functions $\bar\varphi_{\bar c}(\cdot)$ and $\Phi_{\bar c}^{-1}(\cdot)$ are nondecreasing,  in view of  \eqref{eq:3.18} and \eqref{eq:3.20},  we conclude that there is a constant $C>0$ depending only on $(\tilde\beta,\bar c,T,\varphi(1),\check p,\lambda)$ such that
\begin{equation}\label{eq:3.21}
\begin{array}{l}
\Dis \E_{\bar\Q}\left[\left(\frac{\bar\varphi_{\bar c}\left({\rm e}^{-\tilde\beta T}\Phi_{\bar c}^{-1}(\eta+T)\right)}{\bar\varphi_{\bar c}\left({\rm e}^{-\tilde\beta T}\Phi_{\bar c}^{-1}(\eta)\right)}\right)^{\check p\lambda {\rm e}^{2\tilde\beta T}}\right]\leq \Dis C \E_{\bar\Q}\left[\left(\frac{1}{\bar\varphi_{\bar c}\left(\Phi_{\bar c}^{-1}(\eta)\right)}\right)^{\check p\lambda {\rm e}^{2\tilde\beta T}}{\bf 1}_{0<\eta<\Phi_{\bar c}(1)}\right]+C\vspace{0.1cm}\\
\ \ \leq \Dis C \E_{\bar\Q}\left[\max_{t\in\T}\left(\frac{1}{\bar\varphi_{\bar c}\left(\Phi_{\bar c}^{-1}\left(\E_{\bar\Q}\left[\Phi_{\bar c}(\bar\xi)|\F_t\right]\right)\right)}\right)^{\check p\lambda {\rm e}^{2\tilde\beta T}}\right]+C.
\end{array}
\end{equation}
Since $\Phi_{\bar c}(\cdot),\bar\varphi_{\bar c}(\cdot)\in \Sbf$, it is not difficult to verify that the function  $\left(\frac{1}{\bar\varphi_{\bar c}\left(\Phi_{\bar c}^{-1}(\cdot)\right)}\right)^{\check p\lambda {\rm e}^{2\tilde\beta T}/2}$ is convex on $\R_{++}$. Then, by Jensen's inequality we have
$$
\left(\frac{1}{\bar\varphi_{\bar c}\left(\Phi_{\bar c}^{-1}\left(\E_{\bar\Q}\left[\Phi_{\bar c}(\bar\xi)|\F_t\right]\right)\right)}\right)^{\check p\lambda {\rm e}^{2\tilde\beta T}}\leq \left(\E_{\bar\Q}\left[\left.\left(\frac{1}{\bar\varphi_{\bar c}(\bar\xi)}\right)^{\check p\lambda {\rm e}^{2\tilde\beta T}/2}\right|\F_t\right]\right)^2.
$$
Furthermore, by Doob's maximal inequality on sub-martingales and H\"{o}lder's inequality we obtain
\begin{equation}\label{eq:3.22}
\begin{array}{l}
\Dis \E_{\bar\Q}\left[\max_{t\in\T}\left(\frac{1}{\bar\varphi_{\bar c}\left(\Phi_{\bar c}^{-1}\left(\E_{\bar\Q}\left[\Phi_{\bar c}(\bar\xi)|\F_t\right]\right)\right)}\right)^{\check p\lambda {\rm e}^{2\tilde\beta T}}\right]\leq 4\E_{\bar\Q}\left[\left(\frac{1}{\bar\varphi_{\bar c}(\bar\xi)}\right)^{\check p\lambda {\rm e}^{2\tilde\beta T}}\right]\vspace{0.1cm}\\
\ \ \Dis =4\E\left[\left(\frac{1}{\bar\varphi_{\bar c}(\bar\xi)}\right)^{\check p\lambda {\rm e}^{2\tilde\beta T}}\frac{{\rm d}\bar\Q}{{\rm d}\p}\right]\leq
4\left\{\E\left[\left(\frac{1}{\bar\varphi_{\bar c}(\bar\xi)}\right)^{\hat p\lambda {\rm e}^{2\tilde\beta T}}\right]\right\}^{\frac{\check p}{\hat p}}
\left\{\E\left[\left(\frac{{\rm d}\bar\Q}{{\rm d}\p}\right)^{\frac{\hat p}{\hat p-\check p}}\right]\right\}^{\frac{\hat p-\check p}{\hat p}}.
\end{array}
\end{equation}
On the other hand, since $\bar\varphi(\cdot)\in \Sbf$, we have
$$
\bar\varphi_{\bar c}(\bar\xi)={\rm e}^{-\tilde\beta T}\varphi\left({\rm e}^{\int_0^T a_s{\rm d}s}\xi\right)+\bar c
\leq {\rm e}^{-\tilde\beta T}\varphi\left({\rm e}^{\tilde\beta T}\xi\right)+\bar c
\leq \varphi\left(\xi\right)+\bar c
$$
and
$$
\bar\varphi_{\bar c}(\bar\xi)={\rm e}^{-\tilde\beta T}\varphi\left({\rm e}^{\int_0^T a_s{\rm d}s}\xi\right)+\bar c
\geq {\rm e}^{-\tilde\beta T}\varphi\left({\rm e}^{-\tilde\beta T}\xi\right)+\bar c \geq {\rm e}^{-2\tilde\beta T}\varphi\left(\xi\right)+\bar c.
$$
Thus, if $\frac{1}{\varphi(\xi)+c}\in L^{\hat p\lambda {\rm e}^{2\tilde\beta T}}(\R_+)$, then it follows from \eqref{eq:3.19}, \eqref{eq:3.21} and \eqref{eq:3.22} that
$${\rm e}^{\int_0^T q_s{\rm d}s}\in L^{\bar p}(\R_+).$$
It remains to show that $(f^*(t,q^*_t))_{t\in\T}\in \lcal^p(\R_+)$. In fact, in light of \eqref{eq:3.3} and the definition of $q^*_\cdot$, similarly as in \cref{lem:2.1} we obtain that for each $t\in\T$,
\begin{equation}\label{eq:3.23}
f(t,Y_t)=f^*(t,q^*_t)+q^*_tY_t.
\end{equation}
Then, in light of \eqref{eq:3.23} and assumption \ref{A1}, we have for each $t\in\T$,
$$
0\leq f^*(t,q^*_t)\leq f(t,Y_t)\leq \bar\alpha_t+\beta Y_t.
$$
As a result, $f^*(\cdot,q^*_\cdot)\in \lcal^p(\R_+)$ due to $\bar\alpha_\cdot\in \lcal^p(\R_+)$ and $Y_\cdot\in \scal^p(\R_+)$.

{\bf Step 3.} In this step we prove that for each $t\in\T$,
\begin{equation}\label{eq:3.24}
Y_t={\rm e}^{-\int_0^t q^*_s {\rm d}s}Y_t^{q^*_\cdot}.
\end{equation}
In fact, it follows from \eqref{eq:3.23} that $(Y_\cdot,Z_\cdot)$ solves the following BSDE:
$$
Y_t=\xi+\int_t^T \left(q^*_sY_s+f^*(s,q^*_s)+\bar f(s,Y_s)+\tilde f(s,Y_s,Z_s)\right){\rm d}s-\int_t^T Z_s {\rm d}B_s, \ \ t\in\T.
$$
Define for each $t\in\T$,
$$
\check Y_t:={\rm e}^{\int_0^t q^*_s {\rm d}s}Y_t\ \ {\rm and}\ \
\check Z_t:={\rm e}^{\int_0^t q^*_s {\rm d}s}Z_t.
$$
Then, by It\^{o}'s formula we know that $(\check Y_\cdot,\check Z_\cdot)$ solves BSDE \eqref{eq:3.6} with $q_\cdot:=q^*_\cdot$. On the other hand,
by H\"{o}lder's inequality we deduce that, in light of $q^*_\cdot\in\hcal$,
$$
\E\left[\sup\limits_{t\in\T}|\check Y_t|^{\tilde p}\right]\leq \E\left[{\rm e}^{\tilde p \int_0^T q^*_s{\rm d}s}\sup\limits_{t\in\T}|Y_t|^{\tilde p}\right]\leq \left(\E\left[ {\rm e}^{\bar p \int_0^T q^*_s{\rm d}s}\right]\right)^{\frac{\tilde p}{\bar p}}\left(\E\left[\sup\limits_{t\in\T}|Y_t|^p \right]\right)^{\frac{\tilde p}{p}}<+\infty
$$
and
$$
\begin{array}{lll}
\Dis \E\left[\left(\int_0^T |\check Z_t|^2 {\rm d}t\right)^{\frac{\tilde p}{2}}\right]&\leq & \Dis \E\left[{\rm e}^{\tilde p \int_0^T q^*_s{\rm d}s}\left(\int_0^T |Z_t|^2 {\rm d}t\right)^{\frac{\tilde p}{2}}\right]\vspace{0.1cm}\\
&\leq & \Dis \left(\E\left[ {\rm e}^{\bar p \int_0^T q^*_s{\rm d}s}\right]\right)^{\frac{\tilde p}{\bar p}}\left(\E\left[\left(\int_0^T |Z_t|^2 {\rm d}t\right)^{\frac{p}{2}}\right]\right)^{\frac{\tilde p}{p}}\vspace{0.1cm}
< +\infty,
\end{array}
$$
which means that $(\check Y_\cdot, \check Z_\cdot)\in \scal^{\tilde p}(\R_{++})\times\mcal^{\tilde p}(\R^{1\times d})$. Thus, the uniqueness of the adapted solution of BSDE \eqref{eq:3.6} in the space of $\scal^{\tilde p}(\R_{++})\times\mcal^{\tilde p}(\R^{1\times d})$ yields that $\check Y_\cdot=Y^{q^*_\cdot}_\cdot$, which is \eqref{eq:3.24}. That is to say, \eqref{eq:3.12} holds true. The proof of \cref{thm:3.2} is then complete.
\end{proof}

\section{Sharp result on some special BSDE}
\label{sec:4-FurtherDiscussion}
\setcounter{equation}{0}

In this section, we are concerned with the following BSDE of special form:
\begin{equation}\label{eq:4.1}
Y_t=\xi+\int_t^T \left[(k_1(s))^{1-\alpha}Y_s^\alpha+k_2(s)Y_s+k_3(s)|Z_s|+k_4(s)\right]{\rm d}s-\int_t^T Z_s {\rm d}B_s, \ \ t\in\T,
\end{equation}
where $\xi\in L^p(\R_{++})$ for some $p>1$, both $\alpha\in (0,1)$ and $c\in\R_{++}$ are constants, and $k_i(\cdot)\ {i=1,2,3,4}$ are four $(\F_t)$-progressively measurable real-valued processes defined on $\Omega\times\T$ such that
$$
k_1(\cdot)\in \lcal^p(\R_+),\ \ k_2(\cdot)\in [-c,c],\ \ k_3(\cdot)\in [0,c]\ \ {\rm and}\ \ k_4(\cdot)\in\lcal^p(\R_+).
$$

The main result of this section is stated as follows.

\begin{thm}\label{thm:4.1}
BSDE \eqref{eq:4.1} admits a unique adapted solution $(Y_t,Z_t)_{t\in\T}$ in $\scal^p(\R_{++})\times\mcal^p(\R^{1\times d})$.
\end{thm}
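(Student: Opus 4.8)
The plan is to follow the analytical route sketched in the introduction: reduce BSDE \eqref{eq:4.1} to a jointly convex BSDE with quadratic growth in $z$ through a power change of variables, and then close uniqueness by the $\theta$-difference method. I would first record the existence of an adapted solution $(Y_t,Z_t)_{t\in\T}\in\scal^p(\R_{++})\times\mcal^p(\R^{1\times d})$. Since $y^\alpha\le 1+y$ for $y\in\R_+$, the generator of \eqref{eq:4.1} is continuous and of linear growth in $(y,z)$ with $\lcal^p(\R_+)$ coefficients, so an $\scal^p\times\mcal^p$ solution exists by the truncation-and-comparison argument already used in \cref{rmk:3.3}(i). Strict positivity $Y_\cdot\in\R_{++}$ then follows from the classical comparison theorem: since $(k_1)^{1-\alpha}Y^\alpha\geq 0$ and $k_4\geq 0$, one compares $Y_\cdot$ with the solution of the Lipschitz BSDE whose generator is $k_2(s)y+k_3(s)|z|$, and absorbing the $k_3(s)|Z_s|$ term into a Girsanov drift (its kernel being bounded by $c$) gives $Y_t\ge \E_{\mathbb{Q}}\big[{\rm e}^{\int_t^T k_2(s){\rm d}s}\xi\,\big|\,\F_t\big]\ge {\rm e}^{-cT}\E_{\mathbb{Q}}[\xi\,|\,\F_t]>0$ because $\xi>0$.

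For the change of variables I set $\bar\xi:=\frac{1}{1-\alpha}\xi^{1-\alpha}$ and, for $t\in\T$, $\bar Y_t:=\frac{1}{1-\alpha}Y_t^{1-\alpha}$ and $\bar Z_t:=Y_t^{-\alpha}Z_t$. It\^{o}'s formula shows that $(Y_\cdot,Z_\cdot)\in\scal^p(\R_{++})\times\mcal^p(\R^{1\times d})$ solves \eqref{eq:4.1} if and only if $(\bar Y_\cdot,\bar Z_\cdot)$ solves the BSDE with terminal value $\bar\xi$ and generator
\[
\bar g(t,\bar y,\bar z):=k_1(t)^{1-\alpha}+(1-\alpha)k_2(t)\bar y+k_3(t)|\bar z|+k_4(t)\big((1-\alpha)\bar y\big)^{-\frac{\alpha}{1-\alpha}}+\frac{\alpha}{2(1-\alpha)}\frac{|\bar z|^2}{\bar y},
\]
where $(\bar Y_\cdot,\bar Z_\cdot)$ lives in $\scal^{p/(1-\alpha)}(\R_{++})$ together with the corresponding integrability for $\bar Z_\cdot$ (note $\sup_t|\bar Y_t|^{p/(1-\alpha)}$ is a constant multiple of $\sup_t|Y_t|^p$); for $\alpha=1/2$, $k_1\equiv1$ and $k_2=k_3=k_4=0$ this is exactly BSDE \eqref{eq:1.11}. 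The point is that every term of $\bar g$ is jointly convex on $\R_{++}\times\R^{1\times d}$: the first is constant, the second affine, the third convex in $\bar z$, the fourth is $k_4\ge0$ times the convex decreasing map $\bar y\mapsto\bar y^{-\alpha/(1-\alpha)}$, and the last is the jointly convex perspective function. Hence $\bar g$ is jointly convex and nonnegative, so the reduction turns the Peano-type BSDE into a convex quadratic one, the bijection making uniqueness for $(\bar Y,\bar Z)$ equivalent to uniqueness for $(Y,Z)$.

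To prove uniqueness I would take two solutions $(\bar Y^1,\bar Z^1)$, $(\bar Y^2,\bar Z^2)$ in this space, fix $\theta\in(0,1)$, and set $\delta Y:=\bar Y^1-\theta\bar Y^2$, $\delta Z:=\bar Z^1-\theta\bar Z^2$, so that $\delta Y_T=(1-\theta)\bar\xi$. On the set $\{\delta Y>0\}=\{\bar Y^1>\theta\bar Y^2\}$ the convexity inequality displayed after \eqref{eq:1.11} bounds the generator difference by $(1-\theta)\,\bar g\big(\tfrac{\delta Y}{1-\theta},\tfrac{\delta Z}{1-\theta}\big)$. Applying It\^{o}-Tanaka's formula to ${\rm e}^{(1-\alpha)ct}(\delta Y_t)^+$, absorbing the resulting $k_3|\delta Z|$ contribution by a further Girsanov change of measure, and taking conditional expectations, one controls $(\delta Y_t)^+$ by a terminal term of order $1-\theta$ plus the integral of the rescaled generator; letting $\theta\uparrow1$ yields $\bar Y^1\le\bar Y^2$, and by symmetry $\bar Y^1=\bar Y^2$, whence $Z^1=Z^2$ and the theorem.

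The main obstacle is exactly the interplay of the two singularities of $\bar g$ at $\bar y=0$, namely the quadratic term $|\bar z|^2/\bar y$ and the source $\bar y^{-\alpha/(1-\alpha)}$, together with the fact that no integrability on $1/\xi$ is assumed, so the $\bar Y^i$ need not be bounded away from $0$. The delicate point is to show that $(1-\theta)\,\bar g\big(\tfrac{\delta Y}{1-\theta},\tfrac{\delta Z}{1-\theta}\big)$ stays integrable and contributes a vanishing (or controllable) amount as $\theta\uparrow1$; here the monotonicity of the source term helps, since on $\{\bar Y^1>\theta\bar Y^2\}$ the argument $\tfrac{\delta Y}{1-\theta}$ is large precisely where $\bar y^{-\alpha/(1-\alpha)}$ is small, whereas the quadratic term must be handled with the $L^p$ a priori estimates together with Doob's and H\"{o}lder's inequalities characteristic of convex quadratic BSDEs with unbounded terminal data, in the spirit of \citet{BriandHu2008PTRF} and \citet{FanHu2021SPA}.
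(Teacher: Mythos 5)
Your set-up coincides with the paper's: the same power change of variables $H(u)=\frac{1}{1-\alpha}u^{1-\alpha}$ producing the jointly convex generator with the terms $\frac{\alpha}{2(1-\alpha)}\frac{|z|^2}{y}$ and $k_4\,y^{-\alpha/(1-\alpha)}$, followed by the $\theta$-difference and It\^{o}--Tanaka. But you have left the decisive step --- controlling $(1-\theta)\,\bar g\bigl(\tfrac{\delta Y}{1-\theta},\tfrac{\delta Z}{1-\theta}\bigr)$ and passing to the limit $\theta\uparrow 1$ --- as an acknowledged open point, and the route you gesture at would not close it. Note first that this quantity does \emph{not} vanish as $\theta\uparrow 1$: the quadratic term is positively homogeneous of degree one in $(y,z)$, so $(1-\theta)\frac{\alpha}{2(1-\alpha)}\frac{|\delta Z/(1-\theta)|^2}{\delta Y/(1-\theta)}=\frac{\alpha}{2(1-\alpha)}\frac{|\delta Z|^2}{\delta Y}$ exactly. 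Moreover the Briand--Hu machinery for convex quadratic BSDEs that you invoke requires exponential moments of the terminal value, which are unavailable here since only $\xi\in L^p(\R_{++})$ is assumed and $1/\xi$ has no integrability; this is precisely why the paper cannot quote those results.

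The paper's resolution is a second, \emph{inverse} change of variables applied to the $\theta$-difference itself. One normalizes $\hat Y^\theta:=\frac{\tilde Y^1-\theta\tilde Y^2}{1-\theta}$ and $\hat Z^\theta:=\frac{\tilde Z^1-\theta\tilde Z^2}{1-\theta}$, so that the terminal value of the It\^{o}--Tanaka equation for $(\hat Y^\theta)^+$ is $\tilde\xi$ (independent of $\theta$) and its generator is dominated by the \emph{same} convex generator $g$ evaluated at $((\hat Y^\theta)^+,\hat Z^\theta)$, up to a nonpositive drift $-\kappa^\theta$ and a local-time term of favorable sign. Applying $H^{-1}(u)=[(1-\alpha)u]^{1/(1-\alpha)}$ to $(\hat Y^\theta)^+$ then undoes the power transformation and yields a BSDE of the original pre-transformation form, whose generator has \emph{linear} growth by Young's inequality $k^{1-\alpha}x^\alpha\le(1-\alpha)k+\alpha x$ --- both singularities at $y=0$ disappear simultaneously. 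The standard $L^p$ a priori estimate (the paper's \cref{pro:5.1}, which tolerates the extra nonincreasing finite-variation part) then gives a bound on $\bar Y^\theta_t:=H^{-1}((\hat Y^\theta_t)^+)$ that is uniform in $\theta$, whence $(\tilde Y^1_t-\theta\tilde Y^2_t)^+=\frac{1-\theta}{1-\alpha}(\bar Y^\theta_t)^{1-\alpha}\le(1-\theta)\cdot C_t$ with $C_t$ independent of $\theta$, and $\theta\uparrow1$ concludes. Without this step (or an equivalent device) your argument does not go through, so the gap is genuine and sits exactly where the theorem's new content lies.
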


\begin{proof} For each $x,k\in\R_+$, by Young's inequality we have
\begin{equation}\label{eq:4.2*}
k^{1-\alpha}x^\alpha\leq (1-\alpha)k+\alpha x.
\end{equation}
The existence of an adapted solution $(Y_\cdot,Z_\cdot)_{t\in\T}\in\scal^p(\R_{++})\times\mcal^p(\R^{1\times d})$ of BSDE \eqref{eq:4.1} is available in \cref{rmk:3.3} (i). Now, we prove the uniqueness by employing the $\theta$-difference method used in for example \citet{BriandHu2008PTRF} and  \citet{FanHuTang2023arXiv}.

Define $\bar \xi:={\rm e}^{\int_0^T k_2(s){\rm d}s}\xi\in L^p(\R_{++})$ and
$$
\bar Y_\cdot:={\rm e}^{\int_0^\cdot k_2(s){\rm d}s} Y_\cdot\in \scal^p(\R_{++}),\ \ \ \bar Z_\cdot:={\rm e}^{\int_0^\cdot k_2(s){\rm d}s}Z_\cdot\in \mcal^p(\R^{1\times d}),
$$
$$
\bar k_1(\cdot):={\rm e}^{\int_0^\cdot k_2(s){\rm d}s}k_1(\cdot)\in \lcal^p(\R_+),\ \ \ {\rm and}\ \ \ \bar k_4(\cdot):={\rm e}^{\int_0^\cdot k_2(s){\rm d}s}k_4(\cdot)\in \lcal^p(\R_+).
$$
It follows from It\^{o}'s formula that $(\bar Y_\cdot, \bar Z_\cdot)$ solves the following BSDE:
\begin{equation}\label{eq:4.2}
\bar Y_t=\bar\xi+\int_t^T \left[(\bar k_1(s))^{1-\alpha} \bar Y_s^\alpha+ k_3(s)|\bar Z_s|+\bar k_4(s)\right]{\rm d}s-\int_t^T \bar Z_s {\rm d}B_s, \ \ t\in\T.
\end{equation}
Let
$$
H(u):=\int_0^u \frac{1}{x^\alpha}{\rm d}x=\frac{1}{1-\alpha}u^{1-\alpha},\ \ u\in\R_{+}.
$$
Then
$$
H^{-1}(x):=\left[(1-\alpha)x\right]^{\frac{1}{1-\alpha}},\ \ x\in\R_+
$$
is the inverse of function $H(\cdot)$. Furthermore, we define $\tilde\xi:=H(\bar\xi)\in L^{\frac{p}{1-\alpha}}(\R_{++})$,
$$
\tilde Y_\cdot:=H(\bar Y_\cdot)=\frac{1}{1-\alpha}\bar Y_\cdot^{1-\alpha}\in \scal^{\frac{p}{1-\alpha}}(\R_{++}),\ \ \ \tilde Z_\cdot:=H'(\bar Y_\cdot)\bar Z_\cdot=\frac{\bar Z_\cdot}{\bar Y_\cdot^\alpha}
$$
and
$$
\tilde k_4(\cdot):=
\frac{1}{\left(1-\alpha\right)^\frac{\alpha}{1-\alpha}}\bar k_4(\cdot)\in \lcal^p(\R_+).
$$
By applying It\^{o}'s formula to $\tilde Y_\cdot$, it is not difficult to verify that
\begin{equation}\label{eq:4.3}
\tilde Y_t=\tilde\xi+\int_t^T g(s,\tilde Y_s,\tilde Z_s){\rm d}s-\int_t^T \tilde Z_s {\rm d}B_s, \ \ t\in\T,
\end{equation}
where for each $(\omega,t,y,z)\in \Omega\times\T\times\R_+\times\R^{1\times d}$,
\begin{equation}\label{eq:4.4}
g(\omega,t,y,z):=(\bar k_1(\omega,t))^{1-\alpha}+ k_3(\omega,t)|z|+\frac{\alpha}{2(1-\alpha)}\frac{|z|^2}{y}+\tilde k_4(\omega,t)\frac{1}{y^{\frac{\alpha}{1-\alpha}}}.
\end{equation}
Obviously $\as$, the function $g(\omega,t,\cdot,\cdot)$ is jointly convex on $\R_{++}\times\R^{1\times d}$.

By virtue of the $\theta$-difference method we will show that BSDE \eqref{eq:4.3} admits at most an adapted solution $(\tilde Y_\cdot,\tilde Z_\cdot)$ such that $\tilde Y_\cdot\in \scal^{\frac{p}{1-\alpha}}(\R_{++})$. In fact, suppose that both $(\tilde Y_\cdot^1,\tilde Z_\cdot^1)$ and $(\tilde Y_\cdot^2,\tilde Z_\cdot^2)$ are adapted solutions of BSDE \eqref{eq:4.3} in the space $\scal^{\frac{p}{1-\alpha}}(\R_{++})$. For each $\theta\in (0,1)$, define
$$
\hat Y^\theta_\cdot:=\frac{\tilde Y^1_\cdot-\theta \tilde Y^2_\cdot}{1-\theta}\ \ \ {\rm and}\ \ \ \hat Z^\theta_\cdot:=\frac{\tilde Z^1_\cdot-\theta \tilde Z^2_\cdot}{1-\theta}.
$$
Observe that $\as$, for each $(y_1,y_2,z_1,z_2)\in\R_+\times\R_+\times\R^{1\times d}\times\R^{1\times d}$,
$$
\begin{array}{l}
\Dis {\bf 1}_{\{y_1>\theta y_2\}}\left(g(t,y_1,z_1)-\theta g(t,y_2,z_2)\right)\vspace{0.1cm}\\
\ \ \Dis ={\bf 1}_{\{y_1>\theta y_2\}}\left[g\left(t,~\theta y_2+(1-\theta)\frac{y_1-\theta y_2}{1-\theta},~\theta z_2+(1-\theta)\frac{z_1-\theta z_2}{1-\theta}\right)-\theta g(t,y_2,z_2)\right]\vspace{0.1cm}\\
\ \ \Dis \leq (1-\theta){\bf 1}_{\{y_1>\theta y_2\}}g\left(t,\ \left(\frac{y_1-\theta y_2}{1-\theta}\right)^+, \frac{z_1-\theta z_2}{1-\theta}\right).
\end{array}
$$
Using It\^{o}-Tanaka's formula, we conclude that there exists an $(\F_t)$-progressively measurable $\R_+$-valued process $(\kappa^\theta_t)_{t\in\T}$ depending on $\theta$ such that
\begin{equation}\label{eq:4.5}
(\hat Y_t^\theta)^+=\tilde \xi+\int_t^T \left[{\bf 1}_{\{\hat Y_s^\theta>0\}}g(s,(\hat Y_s^\theta)^+,\hat Z_s^\theta)-\kappa^\theta_s\right]{\rm d}s-\int_t^T {\rm d}L_s^\theta -\int_t^T {\bf 1}_{\{\hat Y_s^\theta>0\}}\hat Z_s^\theta {\rm d}B_s, \ \ t\in\T,
\end{equation}
where $L^\theta_\cdot$ denotes the local time of $\hat Y^\theta_\cdot$ at $0$, which is a nonnegative nondecreasing process. Furthermore, let
$$
\bar Y^\theta_\cdot:=H^{-1}((\hat Y^\theta_\cdot)^+)=\left[(1-\alpha) (\hat Y^\theta_\cdot)^+\right]^{\frac{1}{1-\alpha}}\in \scal^p(\R_+)
$$
and
$$
\bar Z^\theta_\cdot:=(H^{-1})'((\hat Y^\theta_\cdot)^+){\bf 1}_{\{\hat Y_\cdot^\theta>0\}}\hat Z^\theta_\cdot=\frac{1}{1-\alpha}\left[(1-\alpha)(\hat Y^\theta_\cdot)^+\right]^{\frac{\alpha}{1-\alpha}}\hat Z^\theta_\cdot.
$$
In light of the form of \eqref{eq:4.5} along with those arguments from \eqref{eq:4.2} to \eqref{eq:4.4}, applying It\^{o}'s formula to $\bar Y^\theta_\cdot$ yields that for each $t\in\T$,
\begin{equation}\label{eq:4.6}
\bar Y_t^{\theta}=\bar\xi+\int_t^T \left[(\bar k_1(s))^{1-\alpha} (\bar Y_s^\theta)^\alpha+ k_3(s)|\bar Z_s^\theta|+{\bf 1}_{\{\hat Y_s^\theta>0\}}\bar k_4(s)-\bar\kappa^\theta_s\right]{\rm d}s-\int_t^T {\rm d}\bar L^\theta_s -\int_t^T \bar Z_s^\theta {\rm d}B_s,
\end{equation}
where
$$
\bar\kappa^\theta_\cdot:=\left[(1-\alpha)(\hat Y^\theta_\cdot)^+\right]^{\frac{\alpha}{1-\alpha}}\kappa^\theta_\cdot\ \ \ {\rm and}\ \ \ \bar L^\theta_\cdot:=\int_0^\cdot \left[(1-\alpha)(\hat Y^\theta_s)^+\right]^{\frac{\alpha}{1-\alpha}}{\rm d} L^\theta_s.
$$
In the sequel, note that $\bar\xi\in L^p(\R_{++})$, $\bar Y^\theta_\cdot\in\scal^p(\R_+)$, $\bar k_1(\cdot)\in \lcal^p(\R_+)$, $k_3(\cdot)\in [0, c]$, $\bar k_4(\cdot)\in \lcal^p(\R_+)$, $\bar\kappa^\theta_\cdot\in \R_+$, $\bar L^\theta_\cdot$ is a nonnegative nondecreasing process and in light of \eqref{eq:4.2*},
$$
(\bar k_1(s))^{1-\alpha} (\bar Y_s^\theta)^\alpha\leq (1-\alpha)\bar k_1(s)+\alpha \bar Y_s^\theta,\ \ \ s\in\T.
$$
Using a standard argument to establish a priori estimates for the adapted solution of BSDE \eqref{eq:4.6}, see Appendix for details, we deduce that there is a constant $C>0$ independent of $\theta$ such that for each $t\in\T$ and $\theta\in (0,1)$,
$$
|\bar Y^\theta_t|^p\leq C\E\left[\left.|\bar\xi|^p+\left(\int_0^T \left( \bar k_1(s)+\bar k_4(s)\right) {\rm d}s\right)^p\right|\F_t\right]+C,
$$
and then
$$
(\tilde Y^1_t-\theta \tilde Y^2_t)^+\leq \frac{1-\theta}{1-\alpha} \left\{
C\E\left[\left.|\bar\xi|^p+\left(\int_0^T \left( \bar k_1(s)+\bar k_4(s)\right) {\rm d}s\right)^p\right|\F_t\right]+C\right\}^{\frac{1-\alpha}{p}}.
$$
Sending $\theta\to 1$ in the last inequality yields $\tilde Y^1_\cdot\leq \tilde Y^2_\cdot$.  In the same way,  we have $\tilde Y^2_\cdot\leq \tilde Y^1_\cdot$, and therefore, $\tilde Y^2_\cdot=\tilde Y^1_\cdot$. That is to say, BSDE \eqref{eq:4.3} admits at most an adapted solution $(\tilde Y_t,\tilde Z_t)_{t\in\T}$ such that $\tilde Y_\cdot\in \scal^{\frac{p}{1-\alpha}}(\R_{++})$.

Finally, from those arguments from \eqref{eq:4.2*} and \eqref{eq:4.4},  we see that BSDE \eqref{eq:4.1} admits at most an adapted solution $(Y_\cdot,Z_\cdot)\in\scal^p(\R_{++})\times\mcal^p(\R^{1\times d})$.
The proof is complete.
\end{proof}

\begin{rmk}\label{rmk:4.1*}
We have the following three remarks.
\begin{itemize}
\item [(i)] The conclusion of \cref{thm:4.1} remains true when the expression $|Z_s|$ appearing in BSDE \eqref{eq:4.1} is replaced with $f(Z_s)$ for a positive homogeneous and convex function $f(\cdot):\R^d\To \R_+$. In fact, in the proof of \cref{thm:4.1} one only needs to replace the expressions $|\bar Z_s|$, $k_3(\omega,t)|z|$ and $|\bar Z_s^\theta|$ appearing in \eqref{eq:4.2}, \eqref{eq:4.4} and \eqref{eq:4.6} with $f(\bar Z_s)$, $k_3(\omega,t)f(z)$ and $f(\bar Z_s^\theta)$, respectively.

\item [(ii)] By virtue of the relevant result on the $L^1$ solution obtained for example in \citet{BriandDelyonHu2003SPA} and \citet{FanHuTang2023arXiv} one can verify that when $k_3(\cdot)\equiv 0$ and $p=1$, BSDE \eqref{eq:4.1} admits a unique adapted solution $(Y_t,Z_t)_{t\in\T}$ such that $Y_\cdot$ belongs to class (D).

\item [(iii)] For the case of $\essinf_{t\in\T}k_4(t)=0$, the conclusion of \cref{thm:4.1} is better than that of \cref{thm:3.2} since the integrability condition on $1/\varphi(\xi)$ in \cref{thm:3.2} is moved away in \cref{thm:4.1}. However, \cref{thm:4.1} is only able to tackle BSDEs with generators of certain form.
\end{itemize}
\end{rmk}

Finally,  we illustrate \cref{thm:4.1}, by the following typical financial example.

\begin{ex}\label{ex:4.2}
The Kreps-Porteus (or Epstein-Zin) utility of an endowment studied in \citet{ElKarouiPengQuenez1997MF} (see also \citet{Skiadas2013JPE}, \citet{Fan2016SPA}, \citet{Xing2017FS}, \citet{Wang2019IME} and \citet{KublerSeldenWei2020JET}) can be defined by the adapted solution of the following BSDE:
\begin{equation}\label{eq:4.7}
Y_t=\xi+\int_0^T g(Y_s){\rm d}s-\int_t^T Z_s {\rm d}s,\ \ t\in\T,
\end{equation}
where $\xi\in L^p(\R_+)$ for some $p>1$, and for each $y\in\R_+$,
\begin{equation}\label{eq:4.8}
g(y):=\frac{\rho}{\beta}\frac{c^\rho-y^\rho}{y^{\rho-1}}
=\frac{\rho}{\beta}(c^\rho y^{1-\rho}-y)
\end{equation}
with $\beta,c\geq 0$ and $0\neq \rho\leq 1$. It is easily verified that whether the case of $\rho=1$ or the case of $\rho<0$, the generator $g$ satisfies the monotonicity condition. Consequently, by classical results it is concluded that BSDE \eqref{eq:4.7} admits a unique adapted solution $(Y_\cdot, Z_\cdot)\in \scal^p(\R_+)\times\mcal^p(\R^{1\times d})$.

For the case of $\rho\in (0,1)$, the generator $g$ is of Peano-type and all previous existing results fail to answer whether BSDE \eqref{eq:4.7} admits a unique adapted solution $(Y_\cdot, Z_\cdot)$ in the space $\scal^p(\R_{++})\times\mcal^p(\R^{1\times d})$ for  $\xi\in L^p(\R_{++})$. However, \cref{thm:4.1} gives an affirmative answer to this question.
More generally, \cref{thm:4.1}  yields the affirmative answer when the constant $c$ in \eqref{eq:4.8} is replaced with a general consumption process $(c_t)_{t\in\T}\in \lcal^p(\R_+)$.

Moreover, in light of \cref{thm:4.1} along with  \cref{rmk:4.1*} (ii), we see that for the case of $\rho\in (0,1)$, when $\xi\in L^1(\R_{++})$ and the constant $c$ in \eqref{eq:4.8} is replaced with a general consumption process $(c_t)_{t\in\T}\in \lcal^1(\R_+)$, BSDE \eqref{eq:4.7} admits a unique adapted solution $(Y_\cdot, Z_\cdot)$ such that $Y_\cdot$ belongs to class (D).
\end{ex}

\appendix
\section{An a priori estimate}
\renewcommand{\appendixname}{}

In this section, we shall give the a priori estimate employed in the proof of \cref{thm:4.1}.

\begin{pro}\label{pro:5.1}
Suppose that $\xi\in L^p(\R_+)$ for $p>1$, $(L_t)_{t\in\T}$ is an $(\F_t)$-progressively measurable nondecreasing process with $L_0=0$, and the function $g(\omega,t,y,z):\Omega\times\T\times\R_+\times \R^{1\times d}$ is $(\F_t)$-progressively measurable for each $(y,z)\in \R_+\times \R^{1\times d}$ such that $\as$,
$$
g(\omega,t,y,z)\leq f_t(\omega)+\mu |y|+\lambda |z|, \quad (y,z)\in \R_+\times \R^{1\times d},
$$
with $f_\cdot\in \lcal^p(\R_+)$, and $\mu,\lambda\geq 0$ are two nonnegative constants. Let $(Y_t,Z_t)_{t\in\T}$ be an adapted solution of the following BSDE
$$
Y_t=\xi+\int_t^T g(s,Y_s,Z_s){\rm d}s-\int_t^T{\rm d}L_s -\int_t^T Z_s {\rm d}B_s, \ \ t\in\T.
$$
If $Y_\cdot\in\scal^p(\R_{+})$, then $Z_\cdot\in\mcal^p(\R^{1\times d})$ and there exists a constant $C_p>0$ depending only on $p$ such that for each $a\geq \mu+\lambda^2$ and $t\in\T$, we have
$$
\E\left[\left.\left(\int_0^T {\rm e}^{2as}|Z_s|^2 {\rm d}s\right)^{\frac{p}{2}}\right|\F_t\right]
\leq C_p \E\left[\left.\sup\limits_{t\in\T} \left({\rm e}^{apt}|Y_t|^p\right)+\left(\int_0^T {\rm e}^{as}f_s {\rm d}s\right)^p\right|\F_t\right].
$$
Moreover, there exists a constant $\bar C_p>0$ depending only on $p$ such that for each $a\geq \mu+\lambda^2/[1\wedge (p-1)]$ and $t\in\T$, we have
$$
\E\left[\left.\sup\limits_{t\in\T} \left({\rm e}^{apt}|Y_t|^p\right)+\left(\int_0^T {\rm e}^{2as}|Z_s|^2 {\rm d}s\right)^{\frac{p}{2}}\right|\F_t\right]
\leq \bar C_p \E\left[\left. {\rm e}^{apT}|\xi|^p +\left(\int_0^T {\rm e}^{as}f_s {\rm d}s\right)^p\right|\F_t\right].
$$
\end{pro}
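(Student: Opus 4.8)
The plan is to reduce everything to an exponentially reweighted BSDE and then run the classical $L^p$ a priori estimate machinery, exploiting crucially that $Y_\cdot\geq 0$ and that $L_\cdot$ is nondecreasing. Since $Y_\cdot\in\scal^p(\R_+)$, I would set $U_t:={\rm e}^{at}Y_t\geq 0$, $\hat Z_t:={\rm e}^{at}Z_t$ and $\hat f_t:={\rm e}^{at}f_t$, and check by It\^o's formula that $U_\cdot$ solves a BSDE with terminal value ${\rm e}^{aT}\xi$, driver bounded above by $\hat f_s+(\mu-a)U_s+\lambda|\hat Z_s|$, and the same nondecreasing process (reweighted). The role of the weight is that, after the term $\lambda|\hat Z_s|$ is peeled off by Young's inequality, the thresholds $a\geq \mu+\lambda^2$ and $a\geq\mu+\lambda^2/[1\wedge(p-1)]$ are exactly what render the $U_s$-linear part of the driver non-positive. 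Throughout I would first localize by stopping times $\tau_n\uparrow T$ so that every stochastic integral is a genuine martingale, derive the inequalities for the stopped processes, and then pass to the limit by Fatou/monotone convergence; this limiting argument is also what delivers $Z_\cdot\in\mcal^p(\R^{1\times d})$ a posteriori.

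For the first estimate I would apply It\^o's formula to $U^2$ on $[t,T]$. Using the growth bound together with $2\lambda U_s|\hat Z_s|\leq \tfrac12|\hat Z_s|^2+2\lambda^2 U_s^2$ and $a\geq\mu+\lambda^2$, one is led to $\tfrac12\int_t^T|\hat Z_s|^2\,{\rm d}s\leq U_T^2+2\int_t^T U_s\hat f_s\,{\rm d}s-2\int_t^T U_s{\rm e}^{as}\,{\rm d}L_s-2\int_t^T U_s\hat Z_s\,{\rm d}B_s$, where the $L$-integral is $\leq 0$ because $U_s\geq 0$ and ${\rm d}L_s\geq 0$, and is therefore discarded. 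Raising to the power $p/2$, taking $\E[\cdot\,|\,\F_t]$, dominating the martingale via the conditional Burkholder--Davis--Gundy inequality, and absorbing $\E[(\int|\hat Z|^2)^{p/2}|\F_t]$ back to the left after one more Young step against $\sup_{[t,T]}U^p$, yields the first inequality.

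For the second estimate, when $p\geq 2$ I would apply It\^o's formula to $U^p$: the nonnegative term $\tfrac{p(p-1)}{2}\int U^{p-2}|\hat Z|^2$ and the favorable contribution $-p\int U^{p-1}{\rm e}^{as}\,{\rm d}L_s\leq 0$ are dropped for the supremum bound, the term $p\lambda U^{p-1}|\hat Z|$ is split by Young's inequality with the sharp constant that forces the threshold on $a$ (keeping part of the quadratic term for the forthcoming absorption), and $p\int U^{p-1}\hat f$ is handled by Young with conjugate exponents $p/(p-1)$ and $p$. Taking $\sup_{[t,T]}$, applying conditional BDG and a final absorption then gives $\E[\sup_{[t,T]}U^p\,|\,\F_t]\leq \bar C_p\,\E[U_T^p+(\int\hat f)^p\,|\,\F_t]$, and combining this with the first estimate produces the $Z$-part of the second inequality. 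The constants $C_p,\bar C_p$ track only the BDG and Young constants, hence depend on $p$ alone.

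The hard part will be the range $1<p<2$, where $u\mapsto u^p$ is not $C^2$ at the origin and the weight $U^{p-2}$ is singular, so the It\^o computation on $U^p$ is not directly licit. My plan is to regularize by applying It\^o's formula to the smooth function $(U^2+\delta)^{p/2}$, carry the (now bounded) second-order terms through the same absorption scheme, and let $\delta\downarrow 0$ using dominated/monotone convergence. Reproducing the exact Young constant so that the limiting threshold is precisely $a\geq \mu+\lambda^2/[1\wedge(p-1)]$ is the delicate bookkeeping here; once this is in place, the stated conditional estimates follow, the passage to $\int_0^T$ and $\sup_{[0,T]}$ on the majorizing right-hand sides only enlarging them.
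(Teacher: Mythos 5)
Your proposal is correct and follows essentially the same route as the paper: the paper's proof consists precisely of observing that $Y_\cdot\geq 0$ and $L_\cdot$ nondecreasing make the local-time/increasing-process terms sign-favorable (so they can be discarded), and then invoking the standard $L^p$ a priori estimates of Lemma~3.1 and Proposition~3.2 in Briand--Delyon--Hu--Pardoux--Stoica (2003) with conditional expectations in place of expectations --- which is exactly the It\^o-on-$U^2$/It\^o-on-$U^p$, Young, BDG, localization and $(U^2+\delta)^{p/2}$-regularization scheme you reconstruct, including the correct Young split that yields the thresholds $a\geq\mu+\lambda^2$ and $a\geq\mu+\lambda^2/[1\wedge(p-1)]$.
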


\begin{proof}
Observe that under the conditions of \cref{pro:5.1} we have for each $t\in\T$,
$$
{\rm sgn}(Y_t)g(t,Y_t,Z_t)\leq f_t+\mu |Y_t|+\lambda |Z_t|, \quad 2\int_t^T Y_s {\rm d}L_s\leq 0, \quad
\text{\ and\ } \quad
p\int_t^T |Y_s|^{p-1} {\rm sgn}(Y_s) {\rm d}L_s\leq 0.
$$
The desired conclusions follow immediately, using the conditional mathematical expectation with respect to $\F_t$ instead of the original mathematical expectation in the proof procedure of Lemma 3.1 and Proposition 3.2 in \citet{BriandDelyonHu2003SPA}. More details are omitted here. The proof is complete.
\end{proof}



\setlength{\bibsep}{2pt}

\end{document}